\documentclass[11pt]{amsart}
\usepackage[margin=1.12in]{geometry}
\usepackage{amsmath,amssymb,mathtools,booktabs}
\usepackage{xcolor}
\usepackage{hyperref}
\hypersetup{colorlinks=true,linkcolor=blue!50!black,citecolor=blue!50!black,urlcolor=blue!50!black,
pdftitle={Punctured adjacency-degree algebras of Cartesian products},
pdfauthor={Zhipeng Lu}}
\usepackage{aliascnt}
\usepackage[nameinlink,capitalise]{cleveref}
\newtheorem{theorem}{Theorem}[section]
\newaliascnt{proposition}{theorem}
\newtheorem{proposition}[proposition]{Proposition}
\aliascntresetthe{proposition}
\crefname{proposition}{Proposition}{Propositions}
\newaliascnt{lemma}{theorem}
\newtheorem{lemma}[lemma]{Lemma}
\aliascntresetthe{lemma}
\crefname{lemma}{Lemma}{Lemmas}
\newaliascnt{corollary}{theorem}
\newtheorem{corollary}[corollary]{Corollary}
\aliascntresetthe{corollary}
\crefname{corollary}{Corollary}{Corollaries}
\theoremstyle{definition}
\newaliascnt{definition}{theorem}
\newtheorem{definition}[definition]{Definition}
\aliascntresetthe{definition}
\crefname{definition}{Definition}{Definitions}
\newaliascnt{example}{theorem}
\newtheorem{example}[example]{Example}
\aliascntresetthe{example}
\crefname{example}{Example}{Examples}
\theoremstyle{remark}
\newaliascnt{remark}{theorem}
\newtheorem{remark}[remark]{Remark}
\aliascntresetthe{remark}
\crefname{remark}{Remark}{Remarks}
\newcommand{\R}{\mathbb R}
\newcommand{\one}{\mathbf1}
\newcommand{\Balg}{\mathcal B}
\newcommand{\Talg}{\mathcal T}
\newcommand{\Span}{\operatorname{span}}
\newcommand{\Aut}{\operatorname{Aut}}
\newcommand{\End}{\operatorname{End}}
\newcommand{\rank}{\operatorname{rank}}
\newcommand{\diag}{\operatorname{diag}}
\newcommand{\tr}{\operatorname{tr}}
\newcommand{\Spec}{\operatorname{Spec}}

\newcommand{\diam}{\operatorname{diam}}
\newcommand{\ecc}{\operatorname{ecc}}

\newcommand{\Sh}{\mathrm{Sh}}
\newcommand{\ip}[2]{\langle #1,#2\rangle}

\newcommand{\norm}[1]{\lVert #1\rVert}
\newcommand{\bdy}{\mathcal L}
\newcommand{\calC}{\mathcal C}
\newcommand{\defect}{\delta_{\ominus}}

\title[Punctured algebras of Cartesian products]{Punctured adjacency--degree algebras of Cartesian products}
\author{Zhipeng Lu}
\address{Shenzhen MSU--BIT University and Guangdong Laboratory of Machine Perception and Intelligent Computing, Shenzhen, Guangdong 518172, China}
\email{zhipeng.lu@hotmail.com}
\date{}
\subjclass[2020]{05C50, 05E30, 05C60, 15A18}
\keywords{adjacency--degree algebra, vertex deletion, Cartesian product, local spectral measure, Terwilliger algebra, distance-regular graph}

\begin{document}
\begin{abstract}
For a connected regular graph $G$ and a vertex $a$, we study the algebra
generated by the adjacency and degree matrices of $G-a$ and its cyclic
module $P_a$ generated by the all-ones vector.  Our main theorem determines
$\dim P_a$ for Cartesian products whose factors have equitable distance
partitions at the chosen roots.  A normalized logarithmic derivative of
the local spectral generating function partitions the factors into
boundary classes.  We identify the boundary-return space exactly and
express $\dim P_a$ as a sum of affine ranks on additive spectral fibres.
For a distance-regular factor with distinct spectrum $\Theta$, this gives
$\dim P_a(F^{\square m})=|m\Theta|-1$.  For products of powers of two
distinct complete graphs, we evaluate the fibre formula in closed form.
We also determine the full punctured algebras of all Hamming graphs.
Equality with the compressed Terwilliger algebra holds precisely in
dimensions at most four for the hypercube and at most two for larger
alphabets.
For distance-regular graphs, adjacency moments alone determine the
intersection array, with an explicit finite reconstruction.  Finally,
Cartesian stabilizer formulas separate metric loss from orbit splitting;
on Doob graphs their distance-graded defect recovers the number of
Shrikhande factors.
\end{abstract}
\maketitle

\section{Introduction}

The adjacency--degree algebra of a graph is the real unital algebra
generated by its adjacency matrix and diagonal degree matrix.  On a
regular graph the degree matrix is scalar, and the cyclic module generated
by the all-ones vector is one-dimensional.  Deleting a vertex removes this
degeneracy in a controlled way: the remaining degrees distinguish the
neighbors of the deleted vertex.  This leads to a pointed algebra with
only two symmetric generators, rather than a separate diagonal generator
for every distance from the root.

Let $G$ be a finite, simple, connected $d$-regular graph with at least two
vertices, and fix $a\in V(G)$.  Throughout the paper, put
\begin{equation}\label{eq:definitions}
 B=A(G-a),\qquad
 E=\diag(\one_{N(a)}),\qquad
 b=\one_{N(a)},\qquad
 \Balg_a=\langle I,B,E\rangle,\qquad P_a=\Balg_a\one.
\end{equation}
These definitions are equivalent to using $B$ and $D(G-a)$, since
$D(G-a)=dI-E$.  We call $\Balg_a$ the \emph{punctured adjacency--degree
algebra} and $P_a$ its \emph{principal module}.  All algebras and vector
spaces in this paper are over $\R$.  Empty direct sums are omitted.

There are two different questions.  The first asks how much of the
rooted distance structure is generated from $\one$.  The second asks how
much of the entire rooted matrix algebra is generated by $B$ and $E$.
These questions should not be conflated: the principal module can have
dimension equal to the diameter while the full punctured algebra has
many additional irreducible constituents.

\subsection{The product theorem}
For a rooted regular graph $(F,r)$, let $\Theta(F,r)$ be its local spectral
support, and write
\begin{equation}\label{eq:eta-intro}
 \omega(\theta)=\norm{E_\theta e_r}^{2},\qquad
 \Phi(z)=\sum_{\theta\in\Theta(F,r)}\omega(\theta)e^{z\theta},
 \qquad \eta(z)=\frac{1}{d}\frac{\Phi'(z)}{\Phi(z)}.
\end{equation}
Here $E_\theta$ is the adjacency spectral projection and $d$ is the
valency.  The function $\Phi$ is positive on the real line, so $\eta$ is
well-defined there.

Our main result, \cref{thm:main}, applies to a Cartesian product
$G=F_1\square\cdots\square F_\ell$ whenever the distance partition of each
factor is equitable at the specified root.  Distance-regular factors are
an important special case, but neither vertex-transitivity nor Cartesian
primality is needed.  Partition the factor indices into classes
$I_1,\ldots,I_t$ according to equality of the functions $\eta_i$ in
\eqref{eq:eta-intro}.  Set
\[
 T_g=\sum_{i\in I_g}\Theta(F_i,r_i),\qquad
 \Lambda=T_1+\cdots+T_t,
\]
where sums are sums of finite subsets of $\R$.  For $\lambda\in\Lambda$,
form a matrix $R_\lambda$ with one row
\[
 (1,y_1,\ldots,y_t)
 \quad\text{for each }(y_1,\ldots,y_t)\in T_1\times\cdots\times T_t
 \text{ satisfying }y_1+\cdots+y_t=\lambda.
\]
Then
\begin{equation}\label{eq:main-intro}
 \dim P_a(G)=\sum_{\lambda\in\Lambda}\rank R_\lambda-1.
\end{equation}
The substantive step is the exact classification of the boundary-return
space, not the final finite-dimensional spectral rank calculation.  In
particular, the local spectral weights enter through the partition into
boundary classes; the supports alone need not specify that partition.

For a single distance-regular factor $F$ with distinct spectrum $\Theta$,
all copies belong to one class, and \eqref{eq:main-intro} becomes
\[
 \dim P_a(F^{\square m})=|m\Theta|-1.
\]
For $m\ge2$, this equals $m\diam(F)$ exactly when $\Theta$ is an arithmetic
progression.  Different boundary classes can resolve spectral collisions
that the adjacency Krylov space cannot resolve.  We quantify this
phenomenon for $K_p^{\square m}\square K_q^{\square n}$, $p\ne q$.  If
$g=\gcd(p,q)$, $p'=p/g$, and $q'=q/g$, then
\begin{equation}\label{eq:complete-intro}
 \dim P_a=(m+1)(n+1)
       -(m-2q'+1)_+(n-2p'+1)_+-1,
\end{equation}
where $x_+=\max\{x,0\}$; see \cref{thm:complete}.

\subsection{Full algebras, moments, and stabilizers}
For a distance-regular graph of diameter $D$, the principal module is the
span of the $D$ nonzero-distance sphere indicators, and the punctured
algebra acts on it as $M_D(\R)$.  We show that the scalar adjacency moments
$\one^{\mathsf T}B^j\one$, without degree insertions, determine the
intersection array.  Moments with $0\le j\le2L-1$ suffice among graphs of
diameter at most $L$; the proof gives an explicit reconstruction.

Let $\Talg(a)$ be the Terwilliger algebra and $e=I-E_0^*$ the projection
away from the root.  A general corner-generation lemma shows that, when
$G-a$ is connected, the passage from $\Balg_a$ to $e\Talg(a)e$ consists
exactly of adjoining the remaining distance projectors.  Consequently,
the two algebras coincide for distance-regular graphs of diameter at most
two.  This equality does not persist in high diameter.  For the hypercube
$Q_d$, we prove
\begin{equation}\label{eq:cube-intro}
 \Balg_a(Q_d)\cong
 M_d(\R)\oplus M_{d-1}(\R)\oplus\R^{\max\{d-3,0\}}
 \qquad(d\ge2),
\end{equation}
and $\Balg_a(Q_1)=\R$.  It follows that
$\Balg_a(Q_d)=e\Talg(a)e$ if and only if $1\le d\le4$.
For the other Hamming graphs, \cref{thm:hamming} gives
\[
 \Balg_a(H(d,q))\cong M_d(\R)\oplus M_d(\R)\oplus
 M_{d-1}(\R)\oplus\R^{d-1}\qquad(q\ge3,\ d\ge2).
\]
Its dimension is $3d^2-d$, independent of $q\ge3$, and equality with
the compressed corner holds exactly when $d\le2$.

The comparison with symmetry uses
\[
 U_a^-=(\R^{V(G)\setminus\{a\}})^{\Aut(G)_a},\qquad
 \defect(G,a)=\dim U_a^--\dim P_a.
\]
We give the Cartesian product formula for $U_a^-$ and a distance-graded
version.  For the Doob graph
$\Sh^{\square m}\square K_4^{\square n}$, of diameter $N=2m+n$, they yield
\[
 \defect=(n+1)\binom{m+3}{3}-1-N,
\]
and the coefficient in degree $N$ of the graded defect is $m$.  This
last invariant uses stabilizer orbits as well as the punctured algebra;
it is not asserted to be recoverable from adjacency--degree moments.

\subsection{Relation to earlier work}
The unpunctured adjacency--degree algebra and its principal module are
studied by Lu and Li~\cite{LuLi2026}.  We use that terminology, but prove
all structural statements needed here.  The spectral support of a rooted
cyclic module belongs to local spectral graph theory; relevant treatments
include Fiol and Garriga~\cite{FiolGarriga1997} and the walk-matrix viewpoint
of Godsil~\cite{Godsil2012}.  For distance-regular graphs, our sphere
module is the root-deleted primary module of the Terwilliger
algebra~\cite{Terwilliger1992,BCN1989}.  Its tridiagonal action is
classical.  The moment reconstruction below is an application of that
action to the punctured all-ones seed, rather than a new classification
of distance-regular graphs.

Hanaki and Yoshikawa~\cite{HanakiYoshikawa2023} compare algebras
generated by adjacency together with the root projection, the distance
projections, or the stabilizer-orbit projections, and with the stabilizer
centralizer.  Their unpunctured algebra $\langle A,E_0^*\rangle$ is
different from our residual algebra $\langle B,E_1^*|_e\rangle$.  Their comparison supplies the broader framework for the
more specific generation questions addressed here.

The full Terwilliger algebra of a strongly regular graph was studied by
Tomiyama and Yamazaki~\cite{TomiyamaYamazaki1994}, and that of the hypercube
by Go~\cite{Go2002}.  Levstein, Maldonado, and
Penazzi~\cite{LevsteinMaldonadoPenazzi2006} determine the full Terwilliger
algebra of every Hamming scheme.  Our hypercube proof recalls the classical Boolean
lattice decomposition and determines the smaller algebra generated after
deletion by adjacency and the first-sphere projection; the one-coordinate
Hamming decomposition then gives the larger-alphabet case.  Terwilliger and
\v Zitnik~\cite{TerwilligerZitnik2019} compare the subconstituent algebra
with the quantum adjacency algebra.  Their generators distinguish
raising, lowering, and flat parts at all distances.  They are different
from the two residual generators considered here.

The structure of Hamming and Doob graphs, and their intersection arrays,
is classical~\cite{Shrikhande1959,Doob1972,Egawa1981,BCN1989}.  Terwilliger
modules of Doob schemes were determined by Tanabe~\cite{Tanabe1997};
Morales and Palma~\cite{MoralesPalma2021} study the corresponding quantum
adjacency algebras.  We use these graph families to compare the principal
module with the stabilizer orbit module, not to claim new full
Terwilliger decompositions.  Our product orbit formulas use the standard
automorphism theorem for Cartesian prime factors~\cite{HIK2011}.
The additive equality case used for powers is the elementary
one-dimensional small-sumset theorem; see Nathanson~\cite{Nathanson1996}.

The main result of this paper is the boundary classification and the
resulting rank formula for mixed products.  Its concrete consequences
\eqref{eq:complete-intro} and \eqref{eq:cube-intro} concern, respectively,
loss within the principal module and loss in the full compressed algebra.
These distinguish two obstructions that a dimension calculation for a
single distance-regular graph does not see.

\section{Vertex deletion and boundary returns}\label{sec:deletion}

We equip every vertex space with its usual Euclidean inner product.  A
unital algebra generated by symmetric matrices is closed under transpose;
its invariant subspaces are therefore reducing.  We identify a vector on
$G-a$ with its extension by zero at $a$ whenever convenient.

\begin{lemma}\label{lem:automorphisms}
For a connected regular graph $G$ with at least two vertices, restriction
induces $\Aut(G)_a\cong\Aut(G-a)$.  Moreover $P_a\subseteq U_a^-$.
\end{lemma}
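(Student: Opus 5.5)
The plan is to build the restriction map, show it is a bijection, and then derive the inclusion $P_a\subseteq U_a^-$ from the fact that the generators of $\Balg_a$ commute with the stabilizer.

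\textbf{The restriction map.} Every $\sigma\in\Aut(G)_a$ fixes $a$, so it permutes $V(G)\setminus\{a\}$ and preserves adjacency there. Hence $\sigma\mapsto\sigma|_{V(G)\setminus\{a\}}$ is a group homomorphism $\Aut(G)_a\to\Aut(G-a)$. It is injective, because an automorphism fixing $a$ is determined by its values off $a$.

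\textbf{Surjectivity.} This is the only substantive step. Take $\tau\in\Aut(G-a)$ and extend it by $\tau(a)=a$. We must check that the extension preserves edges at $a$, that is, $\tau(N(a))=N(a)$. The tool is regularity: in $G-a$ a vertex $v$ has degree $d-1$ if $v\in N(a)$ and degree $d$ otherwise. Since $\tau$ preserves degrees in $G-a$, it maps $N(a)$ onto itself.

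The degrees $d-1$ and $d$ are distinct, so this degree separation always holds. One must also check that $N(a)$ is nonempty, so that we are not in a degenerate situation. This is where connectivity and $|V(G)|\ge2$ are used: they give $d\ge1$. If $N(a)$ were empty the extension would still trivially work, so connectivity matters mainly to avoid edge cases. Thus the extension is an automorphism of $G$ fixing $a$, and its restriction is $\tau$.

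\textbf{The inclusion $P_a\subseteq U_a^-$.} Let $\Aut(G)_a$ act on $\R^{V(G)\setminus\{a\}}$ by permutation matrices $\Pi_\sigma$.
\begin{itemize}
\item $\Pi_\sigma$ commutes with $B=A(G-a)$, because $\sigma|$ is an automorphism of $G-a$.
\item $\Pi_\sigma$ commutes with $E=\diag(\one_{N(a)})$, because $\sigma$ preserves $N(a)$.
\end{itemize}
Therefore every element of $\Balg_a=\langle I,B,E\rangle$ commutes with all $\Pi_\sigma$. Since $\one$ is fixed by every $\Pi_\sigma$, each $X\one$ with $X\in\Balg_a$ satisfies $\Pi_\sigma X\one=X\Pi_\sigma\one=X\one$. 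So $P_a$ lies in the fixed space $U_a^-$.

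The surjectivity step, via degrees in $G-a$, is the only point requiring an idea, and it is immediate from regularity.
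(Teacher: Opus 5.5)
Your proof is correct and matches the paper's argument: in $G-a$ the neighbors of $a$ are exactly the vertices of degree $d-1$, so every automorphism of $G-a$ preserves $N(a)$ and extends by fixing $a$, and $P_a\subseteq U_a^-$ holds because the stabilizer's permutation matrices commute with $B$ and $E$ and fix $\one$. Your aside about needing $N(a)\neq\varnothing$ and connectivity is harmless but unnecessary, since the degree-separation argument never uses either.
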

\begin{proof}
A vertex of $G-a$ belongs to $N(a)$ exactly when its residual degree is
$d-1$; every other residual degree is $d$.  Thus every automorphism of
$G-a$ preserves $N(a)$ and extends uniquely by fixing $a$.  Conversely,
root-fixing automorphisms restrict to automorphisms of the deletion.
Their permutation matrices commute with $B$ and $E$ and fix $\one$,
which proves the inclusion.
\end{proof}

\begin{proposition}[Spectral deletion]\label{prop:deletion}
Let $\Theta_a=\{\theta:E_\theta e_a\ne0\}$ be the local adjacency spectral
support of $G$ at $a$.  Then
\begin{equation}\label{eq:deletion}
 \R[B]\one=\R[B]b,\qquad
 \dim\R[B]\one=|\Theta_a|-1\ge\ecc(a).
\end{equation}
In particular $P_a$ is the smallest $B,E$-invariant subspace containing $b$.
\end{proposition}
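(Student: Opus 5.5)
We work in $\R^{V(G)}$ with $V(G-a)$ embedded as the vectors vanishing at $a$, and write $\pi=I-e_ae_a^{\mathsf T}$ for the coordinate projection onto this subspace. The plan is to relate $\R[B]$ to $\R[A]$ through the identity $B=\pi A\pi$ on $\R^{V(G)\setminus\{a\}}$. The starting observation is that, since $G$ is $d$-regular,
\[
 B\one=A\one_{V\setminus a}-(\text{value at }a)=\pi A(\one_V-e_a)=d\,\one-b .
\]
Hence $b=d\one-B\one\in\R[B]\one$, which gives $\R[B]b\subseteq\R[B]\one$.

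For the reverse inclusion we need $\one\in\R[B]b$. We will use the resolvent (Schur complement) identity for deleting a vertex. Write $A=\begin{pmatrix}0&b^{\mathsf T}\\ b&B\end{pmatrix}$. Because $A\one_V=d\one_V$, the vector $\one_V$ is an eigenvector, and it decomposes into a component $\one$ on $V\setminus a$ together with a component $1$ at $a$. From $(A-d)\one_V=0$ we read off $(B-d)\one=-b$. If $d$ is not an eigenvalue of $B$, this yields $\one=-(B-d)^{-1}b\in\R[B]b$ immediately.

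To prove that $d\notin\Spec(B)$, we use that $G$ is connected. By Perron--Frobenius, $d$ is the largest eigenvalue of $A$ and it is simple. By interlacing, $\lambda_{\max}(B)\le d$. Suppose equality held, with a nonnegative eigenvector $v$. Extending $v$ by $0$ at $a$ gives a vector whose Rayleigh quotient under $A$ equals $d$, so it is a $d$-eigenvector of $A$ and must be a multiple of $\one_V$. This contradicts the vanishing at $a$, since the graph has at least two vertices. This step is the only genuinely nontrivial input. It also proves the final claim: $\R[B]\one$ is contained in any $B$-invariant subspace that contains $b$, and conversely $\one\in\R[B]b$. Consequently $P_a=\Balg_a\one$ is the smallest $B,E$-invariant subspace containing $b$, because $Eb=b$ and $\Balg_a\one\ni b$.

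For the dimension count we identify $\R[B]b$ with a local spectral object of $A$. The standard Schur complement gives, for $z$ off the spectrum,
\[
 e_a^{\mathsf T}(z-A)^{-1}e_a=\bigl(z-b^{\mathsf T}(z-B)^{-1}b\bigr)^{-1}.
\]
The left-hand side equals $\sum_{\theta\in\Theta_a}\omega(\theta)/(z-\theta)$, a rational function whose reduced form has denominator of degree $|\Theta_a|$. The function $b^{\mathsf T}(z-B)^{-1}b$ has reduced denominator degree equal to the number of distinct eigenvalues of $B$ in the spectral support of $b$, which is $\dim\R[B]b$. Reading degrees of reduced denominators through the identity (the map $g\mapsto 1/(z-g)$ raises this degree by exactly one for such Stieltjes-type functions) gives $\dim\R[B]b=|\Theta_a|-1$. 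The step where we expect care is here, namely checking that no cancellation occurs; positivity of the weights in both Stieltjes transforms ensures that the poles of each function interlace strictly with its zeros, so no cancellation happens. Finally, $|\Theta_a|-1=\dim\R[A]e_a-1\ge\ecc(a)$, because $e_a,Ae_a,\dots,A^{\ecc(a)}e_a$ have strictly growing supports by distance from $a$ and are therefore independent.
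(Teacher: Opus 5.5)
Your proof is correct, but it reaches the dimension count by a different route from the paper.

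For the inclusion $\one\in\R[B]b$, both arguments rest on invertibility of $dI-B$ together with $(dI-B)\one=b$. You obtain $d\notin\Spec(B)$ from Perron--Frobenius simplicity of $d$, Cauchy interlacing and the Rayleigh quotient. The paper instead gets positive definiteness in one line from $x^{\mathsf T}(dI-B)x=\sum_{uv\in E(G)}(x_u-x_v)^2$ with $x_a=0$ and connectedness, which is more self-contained.

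The real difference is the equality $\dim\R[B]b=|\Theta_a|-1$. The paper avoids resolvents altogether. From the block form of $A$, the subspace $\R e_a\oplus\R[B]b$ is $A$-invariant and contains $e_a$, while $Ae_a=b$ and induction put every $B^kb$ in $\R[A]e_a$. Hence $\R[A]e_a=\R e_a\oplus\R[B]b$, and the count is immediate.

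Your Schur-complement identity with reduced denominators gives the same number. The step you flagged as delicate is in fact trivial and needs no interlacing. If $g=p/q$ in lowest terms with $\deg p<\deg q=k$, then $1/(z-g)=q/(zq-p)$ and $\gcd(q,zq-p)=\gcd(q,p)=1$, so the reduced denominator has degree exactly $k+1$. Positivity of the weights is needed only to identify both reduced denominators with the respective spectral supports.

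The paper's route buys the structural decomposition $\R[A]e_a=\R e_a\oplus\R[B]b$, which is reused in \eqref{eq:adjoined} in the proof of \cref{thm:main}. Yours yields only the dimension, though it uses the same Schur-complement mechanism that underlies \cref{lem:full-returns}.
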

\begin{proof}
The matrix $dI-B$ is positive definite.  Indeed, extend a residual vector
$x$ by $x_a=0$; then
\[
 x^{\mathsf T}(dI-B)x=\sum_{uv\in E(G)}(x_u-x_v)^2,
\]
which vanishes only for the zero vector, since $G$ is connected.  Also
$(dI-B)\one=b$.  The inverse of $dI-B$ is a polynomial in $B$, by spectral
interpolation.  This proves the equality of cyclic spaces and the last
assertion.

Relative to $\R e_a\oplus\R^{V(G-a)}$, the adjacency matrix is
\begin{equation}\label{eq:adjacency-block}
 A=\begin{pmatrix}0&b^{\mathsf T}\\ b&B\end{pmatrix}.
\end{equation}
The subspace $\R e_a\oplus\R[B]b$ is $A$-invariant and contains $e_a$.
Conversely, $Ae_a=b$, and induction in \eqref{eq:adjacency-block} shows
that each $B^kb$ belongs to $\R[A]e_a$.  Hence
\[
 \R[A]e_a=\R e_a\oplus\R[B]b.
\]
The mutually orthogonal nonzero vectors $E_\theta e_a$ form a basis of
$\R[A]e_a$, proving the dimension formula.  Finally,
$e_a,Ae_a,\ldots,A^re_a$ are independent for $r=\ecc(a)$: evaluation at a
vertex at distance $r$, then at distance $r-1$, and so on eliminates
successively the coefficients of a linear relation.  At distance $j$,
$A^je_a$ is positive whereas all earlier powers vanish.
\end{proof}

\begin{definition}
Let $V\subseteq\R^{V(G-a)}$ be $B,E$-invariant and contain $\one$.
Write $\bdy=EV$ and
\[
 H_k=EB^kE\big|_{\bdy}\qquad(k\ge0).
\]
The \emph{boundary-return space} $\calC\subseteq\bdy$ is the smallest
subspace containing $b$ and invariant under all $H_k$.
\end{definition}

\begin{lemma}[Boundary transfer]\label{lem:transfer}
With this notation,
\begin{equation}\label{eq:transfer}
 P_a=\sum_{k\ge0}B^k\calC,
 \qquad
 \dim P_a=\sum_{\lambda\in\Spec(B|_V)}\dim(Q_\lambda\calC),
\end{equation}
where $Q_\lambda$ denotes the spectral projection of $B|_V$.
Only $0\le k<\dim V$ is needed in either the invariance condition or the
span in \eqref{eq:transfer}.
\end{lemma}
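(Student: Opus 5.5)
Set $W=\sum_{k\ge0}B^k\calC$. We prove $P_a=W$ by two inclusions, and then read off the dimension from the spectral decomposition of $B|_V$.

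\emph{$W\subseteq P_a$.} Since $P_a\subseteq V$, we have $P_a=\Balg_a b$ by \cref{prop:deletion}, and $b=E\one\in\bdy$. Each $H_k$ is the restriction of an element of $\Balg_a$, so every vector in $\calC$ has the form $Xb$ with $X\in\Balg_a$. Hence $\calC\subseteq P_a$. Applying powers of $B$ then gives $W\subseteq P_a$.

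\emph{$P_a\subseteq W$.} The space $W$ contains $b$ and is $B$-invariant by construction. Because $P_a$ is the smallest $B,E$-invariant subspace containing $b$, it suffices to show $EW\subseteq W$. Since $E^2=E$, we have $Ec=c$ for $c\in\calC\subseteq\bdy$. Therefore
\[
EB^kc=EB^kEc=H_kc\in\calC\subseteq W .
\]
This is the key observation: compressing back to the boundary is exactly what the return operators $H_k$ record.

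\emph{Dimension formula.} $B|_V$ is symmetric, so $V=\bigoplus_\lambda Q_\lambda V$ orthogonally, and $Q_\lambda$ is a polynomial in $B|_V$. This gives
\[
Q_\lambda W=\sum_k B^kQ_\lambda\calC=Q_\lambda\calC ,
\]
because $B^k$ acts on $Q_\lambda V$ as the scalar $\lambda^k$. Moreover $W$ is $B$-invariant, so it is invariant under each $Q_\lambda$, which yields
\[
W=\bigoplus_\lambda Q_\lambda W=\bigoplus_\lambda Q_\lambda\calC .
\]
Summing dimensions gives \eqref{eq:transfer}.

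\emph{Range of $k$.} By Cayley--Hamilton applied to $B|_V$, every $B^k|_V$ with $k\ge\dim V$ is a linear combination of $B^j|_V$ for $j<\dim V$. Consequently each $H_k$ is a combination of $H_0,\ldots,H_{\dim V-1}$, and invariance under these finitely many operators already implies invariance under all $H_k$. Likewise, the span $\sum_k B^k\calC$ is attained with $k<\dim V$.

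The only delicate point is the inclusion $P_a\subseteq W$, specifically checking that $E$ preserves $W$. It rests on the fact that $\calC\subseteq\bdy=\operatorname{im}E$ lets us insert an $E$ on the right of $EB^k$. Everything else is routine.
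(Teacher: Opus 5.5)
Your proof is correct and follows the paper's argument. You use the same identity $EB^kc=EB^kEc=H_kc$ to get $E$-invariance of $\sum_k B^k\calC$, the same spectral-projection decomposition for the dimension count, and Cayley--Hamilton for the finite range of $k$; the only cosmetic difference is that for $\calC\subseteq P_a$ the paper uses minimality of $\calC$ (since $EP_a$ contains $b$ and is $H_k$-invariant), whereas you write elements of $\calC$ as words in the $H_k$ applied to $b$, which is equivalent.
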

\begin{proof}
The space on the right of the first equality contains $b$, is $B$-invariant,
and is $E$-invariant because
$EB^kc=EB^kEc=H_kc\in\calC$.  It therefore contains $P_a$ by
\cref{prop:deletion}.  In the reverse direction, $EP_a$ contains $b$ and
is invariant under the $H_k$, so $\calC\subseteq EP_a\subseteq P_a$.
This proves the first equality.  Spectral interpolation gives
$\R[B]\calC=\bigoplus_\lambda Q_\lambda\calC$, proving the second.
Cayley--Hamilton supplies both finite bounds.
\end{proof}

The return operators can be computed before deletion.  The following
lemma records precisely why this does not change their cyclic subspace.

\begin{lemma}[Full and residual returns]\label{lem:full-returns}
Let $\widetilde V=\R r\oplus V$ be a real Hilbert space, with $\norm r=1$,
and let
\[
 T=\begin{pmatrix}0&b^{\mathsf T}\\b&B\end{pmatrix}
\]
be self-adjoint.  Suppose $\bdy\subseteq V$ contains $b$, and let $\pi$ be
the orthogonal projection onto $\bdy$.  A subspace $C\subseteq\bdy$
containing $b$ is invariant under every $\pi B^k\pi$ if and only if it is
invariant under every $\pi T^k\pi$.
\end{lemma}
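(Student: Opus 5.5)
Write $T^k$ in block form relative to $\R r\oplus V$ and express the $V$-$V$ block $W_k$ of $T^k$ in terms of powers of $B$ and the rank-one operator $bb^{\mathsf T}$. Since $\pi r=0$ (as $\bdy\subseteq V$), we have $\pi T^k\pi=\pi W_k\pi$, so everything reduces to comparing $\pi W_k\pi$ with the operators $\pi B^j\pi$.

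\emph{Block recursion.} Write
\[
 T^k=\begin{pmatrix}\alpha_k&u_k^{\mathsf T}\\ u_k&W_k\end{pmatrix}.
\]
From $T^{k+1}=T^kT$ one gets
\[
 u_{k+1}=\alpha_k b+\dots,\qquad W_{k+1}=u_kb^{\mathsf T}+W_kB .
\]
Inducting from $W_0=I$ and $u_0=0$, I expect
\[
 W_k=B^k+\sum_{i+j\le k-2} c_{ij}\,B^i b b^{\mathsf T}B^j
\]
with scalar coefficients $c_{ij}$. These are walk expansions: each term is a walk in $G$ from $V$ to $V$ that visits $r$ at some intermediate times, and between consecutive visits to $r$ the walk stays in $V$, which contributes scalar factors of the form $b^{\mathsf T}B^m b$. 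More directly, group a walk by its first and last visits to $r$ to obtain
\[
 W_k=B^k+\sum_{i+j+m=k-2} \beta_m\,B^i b\, b^{\mathsf T}B^j,
\]
where $\beta_m$ is the $(r,r)$ entry of $T^m$. This is the identity I would verify by a clean induction.

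\emph{Compression.} Compressing by $\pi$ gives
\[
 \pi W_k\pi=\pi B^k\pi+\sum_{i,j}\beta_m\,(\pi B^i b)(b^{\mathsf T}B^j\pi).
\]
Since $b\in\bdy$, we have $\pi B^i b=\pi B^i\pi b$, which lies in $C$ whenever $C$ is $\pi B^i\pi$-invariant and contains $b$. Hence each rank-one correction maps $\bdy$ into $\Span\{\pi B^i\pi b\}\subseteq C$. So $\pi B^k\pi$-invariance together with $b\in C$ implies $\pi T^k\pi$-invariance.

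\emph{Converse.} I would induct on $k$. The expansion gives
\[
 \pi B^k\pi=\pi T^k\pi-\sum(\dots)\,\pi B^i b\,(\dots),\qquad i\le k-2 .
\]
By the induction hypothesis, $C$ is invariant under $\pi B^i\pi$ for $i<k$, so $\pi B^i b=\pi B^i\pi b\in C$. The correction terms therefore have range in $C$, which gives invariance under $\pi B^k\pi$. The base cases are $k=0$, where $\pi$ acts as the identity on $C$, and $k=1$, where $\pi T\pi=\pi B\pi$ because the off-diagonal parts involve $r$.

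\emph{Main obstacle.} The key step is establishing the first-visit/last-visit decomposition of $W_k$ with the correct index ranges, in particular confirming that every correction term carries a left factor $B^ib$ with $i\le k-2$. That bound is exactly what makes the induction close. The rest is bookkeeping.
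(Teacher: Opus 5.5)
Your proof is correct. It is the coefficient-by-coefficient version of the paper's identity, with a different argument for the converse.

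\textbf{How the paper argues.} It packages all powers into the compressed resolvents $F(u)=\pi(I-uB)^{-1}\pi$ and $M(u)=\pi(I-uT)^{-1}\pi$. A Schur complement at the root entry, followed by the Sherman--Morrison formula, gives $M=F+\frac{u^2}{1-u^2\beta}(Fb)(Fb)^{\mathsf T}$. The converse uses the symmetric inverse identity, which writes $F$ as a rank-one correction of $M$. Both directions then read off Taylor coefficients.

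\textbf{Your identity versus the paper's.} Your first-entry/last-exit identity $W_k=B^k+\sum_{i+j+m=k-2}\beta_mB^ibb^{\mathsf T}B^j$ is exactly the $u^k$-coefficient of that resolvent formula, because $\sum_m\beta_mu^m=(1-u^2b^{\mathsf T}(I-uB)^{-1}b)^{-1}$. It follows by a joint induction with the companion formula $u_k=\sum_{i+m=k-1}\beta_mB^ib$, using $u_{k+1}=\beta_kb+Bu_k$ and $W_{k+1}=u_kb^{\mathsf T}+W_kB$. Write that companion formula down explicitly in a final version, since the recursion for $W_k$ needs it.

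\textbf{The real difference: the converse.} Instead of inverting the rank-one perturbation, you use the triangularity $i\le k-2$ of the correction terms. This buys you three things:
\begin{itemize}
\item a purely algebraic proof, with no power series, no nonvanishing denominators, and no coefficient extraction;
\item freedom from the closure subtleties the paper has to wave away in its final sentence;
\item a sharper level-by-level statement: given $b\in C$, invariance under $\pi B^k\pi$ for all $k\le K$ is equivalent to invariance under $\pi T^k\pi$ for all $k\le K$.
\end{itemize}
The paper's route instead gives a compact closed form, and handles both directions by the same rank-one mechanism.
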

\begin{proof}
For real $u$ sufficiently close to zero, put
\[
 F(u)=\pi(I-uB)^{-1}\pi\big|_{\bdy},\qquad
 M(u)=\pi(I-uT)^{-1}\pi\big|_{\bdy},\qquad
 \beta=b^{\mathsf T}F(u)b.
\]
Taking the Schur complement of the root entry, followed by the rank-one
inverse formula, gives
\begin{equation}\label{eq:return-resolvent}
 M=F+\frac{u^2}{1-u^2\beta}(Fb)(Fb)^{\mathsf T}.
\end{equation}
All denominators are nonzero near zero.  In particular,
$Mb=Fb/(1-u^2\beta)$ and
$\gamma:=b^{\mathsf T}Mb=\beta/(1-u^2\beta)$, so equivalently
\begin{equation}\label{eq:return-inverse}
 F=M-\frac{u^2}{1+u^2\gamma}(Mb)(Mb)^{\mathsf T}.
\end{equation}
If $C$ contains $b$ and is invariant under every coefficient of $F$, then
$F(u)C\subseteq C$ and $F(u)b\in C$.  Equation
\eqref{eq:return-resolvent} gives $M(u)C\subseteq C$, and taking Taylor
coefficients gives invariance under all full returns.  The converse
follows from \eqref{eq:return-inverse} in exactly the same way.  Taylor
coefficients may be taken after pairing with any vector in $C^\perp$,
so no convergence or closure issue is involved.
\end{proof}

\section{Local spectral classification of Cartesian products}\label{sec:products}

\subsection{Equitable rooted factors and their spectral model}
A partition of a graph's vertex set is \emph{equitable} if the number of
neighbors in a given cell depends only on the cell containing the vertex.
We shall assume that the distance partition
\[
 S_0(r),S_1(r),\ldots,S_D(r),\qquad D=\ecc(r),
\]
is equitable at the specified root.  This is a condition on the rooted
graph; no assertion about other roots is implicit.

\begin{lemma}\label{lem:rooted-radial}
Suppose $F$ is connected and regular and its distance partition at $r$ is
equitable.  Then
\[
 W(F,r):=\R[A(F)]e_r
 =\Span\{\one_{S_j(r)}:0\le j\le D\}.
\]
Its dimension is $D+1$, and the local spectral support has $D+1$ points.
Under a unitary identification with $L^2(\Theta,\omega)$, $e_r$ corresponds
to the constant function $1$ and $A(F)$ to multiplication by $x$.  Moreover,
\begin{equation}\label{eq:local-moments}
 \mathbb E X=0,\qquad \mathbb E X^2=d,\qquad
 \Phi(0)=1,\quad \eta(0)=0,\quad\eta'(0)=1.
\end{equation}
\end{lemma}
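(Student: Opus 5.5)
The plan is to prove the two inclusions for $W(F,r)$ and then read off the remaining claims from the orthogonal spectral decomposition of $e_r$.

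\textbf{Step 1: the span is invariant and contains $e_r$.} Put $W'=\Span\{\one_{S_j(r)}\}$. Equitability gives numbers $c_j,a_j,b_j$ with
\[
A\one_{S_j}=b_{j-1}\one_{S_{j-1}}+a_j\one_{S_j}+c_{j+1}\one_{S_{j+1}}.
\]
Only neighbouring spheres occur, because adjacent vertices have distances to $r$ differing by at most one. Hence $W'$ is $A$-invariant. Since $e_r=\one_{S_0}$, we get $W\subseteq W'$.

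\textbf{Step 2: the reverse inclusion and the dimension.} The coefficient $c_{j+1}$ counts neighbours in $S_j$ of a vertex of $S_{j+1}$, so it is at least $1$. An induction on $j$ then shows that $A^je_r$ equals a positive multiple of $\one_{S_j}$ plus a combination of $\one_{S_i}$ with $i<j$. This triangular structure gives $W'\subseteq W$. The spheres are nonempty and disjoint, so $\dim W=D+1$.

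\textbf{Step 3: the spectral support.} The nonzero vectors $E_\theta e_r$ are mutually orthogonal and span $\R[A]e_r$. Therefore $|\Theta|=D+1$.

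\textbf{Step 4: the unitary model.} Define $U:L^2(\Theta,\omega)\to W$ by
\[
f\mapsto\sum_\theta f(\theta)E_\theta e_r .
\]
This map is isometric, since
\[
\Bigl\|\sum_\theta f(\theta)E_\theta e_r\Bigr\|^2=\sum_\theta|f(\theta)|^2\omega(\theta).
\]
Both spaces have dimension $D+1$, so $U$ is unitary. It sends $1$ to $\sum_\theta E_\theta e_r=e_r$. It intertwines multiplication by $x$ with $A$, because $AE_\theta=\theta E_\theta$.

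\textbf{Step 5: the moments.} With $X$ distributed according to $\omega$, we have
\[
\mathbb E X^k=\sum_\theta\theta^k\omega(\theta)=e_r^{\mathsf T}A^ke_r .
\]
For $k=0,1,2$ this equals $\norm{e_r}^2=1$, then $A_{rr}=0$ (no loops), then $d$ (the valency). So $\Phi(0)=1$ and $\Phi'(0)=\mathbb E X=0$, which gives $\eta(0)=0$.

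Next, $\eta'=\frac1d\bigl(\Phi''/\Phi-(\Phi'/\Phi)^2\bigr)$. At $z=0$ this is $\frac1d(d-0)=1$.

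The only step needing care is the triangularity in Step 2, specifically the positivity of the $c_j$. It follows directly from the definition of the distance partition.
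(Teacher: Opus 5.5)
Your proof is correct and follows essentially the same route as the paper. The shared steps are: invariance of the sphere span from equitability; cyclicity of $e_r$ from the positivity of the backward counts $c_{j+1}$ (the paper phrases this as a tridiagonal matrix with positive off-diagonal entries in the orthonormal sphere basis); the support count from the orthogonal vectors $E_\theta e_r$; the unitary model via $\ip{p(A)e_r}{q(A)e_r}=\sum_\theta\omega(\theta)p(\theta)q(\theta)$; and the moments from $A_{rr}=0$ and $(A^2)_{rr}=d$. You build the map from $L^2(\Theta,\omega)$ into $W$ through the spectral projections rather than sending $p(A)e_r\mapsto p(x)$, but this is the same unitary, so the difference is only cosmetic.
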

\begin{proof}
Equitability makes the sphere span invariant under adjacency.  In the
orthonormal sphere basis, adjacency is a real symmetric tridiagonal
matrix with positive consecutive off-diagonal entries: every sphere has
edges to the preceding sphere, and equitability makes the corresponding
intersection counts positive.  Its first basis vector is cyclic, since
the successive powers have successively nonzero leading coordinates.
This proves the first assertions.  Map $p(A)e_r$ to the function
$p(x)$ on the local support.  The identity
\[
 \ip{p(A)e_r}{q(A)e_r}
   =\sum_{\theta\in\Theta}\omega(\theta)p(\theta)q(\theta)
\]
proves the asserted unitary identification.  Every weight is positive
by definition.  The first two moments are $(A)_{rr}=0$ and $(A^2)_{rr}=d$,
from which the identities for $\Phi$ and $\eta$ follow.
\end{proof}

Now let $(F_i,r_i)$, $1\le i\le\ell$, satisfy the hypotheses of
\cref{lem:rooted-radial}, with positive valencies $d_i$, and put
\[
 G=F_1\square\cdots\square F_\ell,\qquad a=(r_1,\ldots,r_\ell).
\]
Let $b_i$ be the indicator of those neighbors of $a$ obtained by moving in
the $i$th factor only.  Thus $b=\sum_i b_i$, the $b_i$ are orthogonal, and
$\norm{b_i}^2=d_i$.

Write $\Theta_i,\omega_i,\Phi_i,\eta_i$ for the local data of $F_i$.
Let $I_1,\ldots,I_t$ be the equivalence classes of
\begin{equation}\label{eq:boundary-equivalence}
 i\sim j\quad\Longleftrightarrow\quad\eta_i(z)=\eta_j(z)
 \text{ for all real }z.
\end{equation}
We call these the \emph{boundary classes}.  For each class put
\begin{equation}\label{eq:class-data}
 c_g=\sum_{i\in I_g}b_i,\qquad
 T_g=\sum_{i\in I_g}\Theta_i,\qquad \Lambda=T_1+\cdots+T_t.
\end{equation}

\begin{theorem}[Mixed-product boundary classification]\label{thm:main}
In the notation above, the boundary-return space in the tensor product
of the rooted radial spaces, with the root removed, is exactly
\begin{equation}\label{eq:boundary-classification}
 \calC=\Span\{c_1,\ldots,c_t\}.
\end{equation}
For $\lambda\in\Lambda$, let $R_\lambda$ have
rows
\begin{equation}\label{eq:fibre-rows}
 (1,y_1,\ldots,y_t),\qquad
 y\in T_1\times\cdots\times T_t,\quad\sum_g y_g=\lambda.
\end{equation}
Then
\begin{equation}\label{eq:fibre-main}
 \dim P_a(G)=\sum_{\lambda\in\Lambda}\rank R_\lambda-1.
\end{equation}
In particular, this dimension is determined by the rooted local spectral
measures of the factors.
\end{theorem}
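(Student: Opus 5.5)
We work in $\widetilde V=\bigotimes_i W(F_i,r_i)$, which contains $e_a$ and is $A(G)$-invariant, since $A(G)=\sum_i I\otimes A_i\otimes I$. Put $V=\widetilde V\ominus\R e_a$. Using the identification of \cref{lem:rooted-radial}, $\widetilde V\cong L^2(\prod_i\Theta_i,\otimes\omega_i)$, with $A(G)$ acting as multiplication by $x_1+\cdots+x_\ell$ and $e_a$ as the constant $1$. The neighbour vector $b_i$ is the sphere indicator $\one_{S_1(r_i)}$ in factor $i$, and this is the function $x_i$ (indeed $A_ie_{r_i}=\one_{S_1}$). Hence $\bdy=EV=\Span\{b_1,\dots,b_\ell\}$, i.e. the span of the coordinate functions $x_i$.

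By \cref{lem:full-returns} we may replace the residual returns $\pi B^k\pi$ by the full returns $\pi T^k\pi$ with $T=A(G)$. The plan is therefore:

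\textbf{(i) Compute the full return generating function on $\bdy$.}
\[
\ip{x_i}{e^{z\sum_m x_m}x_j}=\prod_m\mathbb E\bigl[e^{zX_m}\bigr]\times\begin{cases}\Phi_i'(z)\Phi_j'(z)/(\Phi_i\Phi_j)& i\ne j,\\ \Phi_i''(z)/\Phi_i & i=j.\end{cases}
\]
Write $\Psi=\prod_m\Phi_m$. In the orthogonal basis $b_i$, with $\norm{b_i}^2=d_i$, the exponential return operator is then
\[
\Psi(z)\bigl(u(z)u(z)^{\mathsf T}+\diag(\kappa_i(z))\bigr)
\]
up to the diagonal rescaling by the $d_i$. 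Here $u_i=\Phi_i'/\Phi_i=d_i\eta_i$ and $\kappa_i=(\log\Phi_i)''$. Invariance under all $\pi T^k\pi$ is equivalent to invariance under these operators for all real $z$, by taking Taylor coefficients.

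\textbf{(ii) Upper bound.} If $\eta_i=\eta_j$, then $\kappa_i/d_i=\kappa_j/d_j$, so $\Span\{c_g\}$ is invariant: on $c_g$ the rank-one part yields a combination of the $c_h$ weighted by $\eta_h$, and the diagonal part is a scalar on class $g$. It also contains $b=\sum_g c_g$. So $\calC\subseteq\Span\{c_1,\dots,c_t\}$.

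\textbf{(iii) Lower bound, the main obstacle.} We must show that the subspace generated from $b$ already contains every $c_g$. The image of $b$ under the operator at parameter $z$ is, in class coordinates, $\Psi\bigl(s(z)\,\eta_g(z)+\eta_g'(z)\bigr)$ on class $g$, where $s=\sum_h d_h\eta_h$. Thus $\calC$ contains the vectors $\sum_g\bigl(s\eta_g+\eta_g'\bigr)c_g$. Combining these with $b$, and applying the operators repeatedly, produces vectors whose class-$g$ coefficients are obtained by applying polynomial differential operators to the distinct functions $\eta_g$.

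First try a Vandermonde/Wronskian argument: apply the operator to vectors already in $\calC$ and obtain $\sum_g f(\eta_g,\eta_g',\dots)c_g$ for a growing family of functions $f$. The $\eta_g$ are distinct real-analytic functions, so some iterate separates the classes, and the resulting vectors span $\Span\{c_g\}$. Concretely, if $w=\sum_g\alpha_gc_g\in\calC$, its image is
\[
\Psi\Bigl(\bigl(\textstyle\sum_h d_h|I_h|\alpha_h\eta_h\bigr)\eta_g+\alpha_g\eta_g'\Bigr)
\]
on class $g$. Iterating with the diagonal part $\alpha_g\mapsto\alpha_g\eta_g'$, after subtracting the rank-one contribution, which is a multiple of the $\eta$-vector, we get $\Span\{(\eta_g'(z))^k\}$ modulo lower terms. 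A nondegeneracy argument at generic $z$ follows, using distinctness of the $\eta_g$ and analyticity. The care needed is that the $\eta_g'$ may coincide even when the $\eta_g$ differ. If so, fall back on the $\eta$-vector itself together with its derivatives in $z$.

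\textbf{(iv) The dimension count.} By \cref{lem:transfer}, $\dim P_a=\sum_\lambda\dim Q_\lambda\calC$. The $B|_V$ spectral projections are harder to handle directly. Instead use the full space: $\R[T](\R e_a+\calC)$ is spanned by the functions $p(\sum x)$ and $p(\sum x)\,\sigma_g$, where $\sigma_g=\sum_{i\in I_g}x_i$. By the residual/full equivalence as in \cref{prop:deletion}, this space equals $\R e_a\oplus P_a$. Restricting to the fibre $\{\sum x=\lambda\}$, each function is determined by its values on that fibre. Since $\sigma_g$ depends only on $y_g=\sigma_g$, which ranges over $T_g$, the space on the fibre is the span of the functions $1,y_1,\dots,y_t$ evaluated at the points of the fibre. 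Its dimension is $\rank R_\lambda$. Summing over $\lambda$ and subtracting $1$ for $e_a$ gives \eqref{eq:fibre-main}.
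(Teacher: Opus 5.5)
Your architecture matches the paper's. The tensor model, the passage to full returns by \cref{lem:full-returns}, the exponential kernel (diagonal part $\eta_i'$ plus a rank-one part along the $\eta$-vector), the invariance of $\Span\{c_1,\dots,c_t\}$ for the upper bound, and the fibre count via $\R e_a\oplus P_a=\R[A]\Span\{e_a,c_1,\dots,c_t\}$ all appear there, and your steps (i), (ii) and (iv) are fine. The gap is step (iii), which you rightly call the crux but do not prove. ``Some iterate separates the classes'' and ``a nondegeneracy argument at generic $z$ follows'' are assertions, not arguments. Your Vandermonde iteration works only if the rank-one contribution can be discarded, that is, only if the $\eta$-vector $\sum_g\eta_g(z)c_g$ itself lies in $\calC$. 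You never show this, so you are left with ``$(\eta_g')^k$ modulo lower terms'', a triangular system whose lower terms are not known to lie in $\calC$. Your worry that the $\eta_g'$ might coincide while the $\eta_g$ differ is also unfounded: $\eta_g(0)=0$ for every class, so $\eta_g'\equiv\eta_h'$ forces $\eta_g\equiv\eta_h$. The proposed fallback is neither needed nor carried out.

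The missing fact is already inside your own formula. Since $s=\sum_i d_i\eta_i=\sum_i\Phi_i'/\Phi_i=\Psi'/\Psi$, the image of $b$ you computed is exactly $\frac{d}{dz}\bigl(\Psi(z)\sum_g\eta_g(z)c_g\bigr)$. Integrating from $0$ and using $\eta_g(0)=0$ puts $\Psi(z)\sum_g\eta_g(z)c_g$ in $\calC$ for all real $z$. The paper obtains the same vector as $\pi e^{zA}e_a=\sum_{k\ge1}\frac{z^k}{k!}\pi A^{k-1}\pi b$.

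With that in hand, the rank-one part maps $\calC$ into $\calC$, so $\calC$ is invariant under each diagonal operator $\diag(\eta_i'(z))$. For $g\ne h$, $\eta_g'-\eta_h'$ is a nonzero real-analytic function, so at some $z$ the values $\eta_g'(z)$ are pairwise distinct across classes. Interpolation in that one diagonal operator (your Vandermonde step, now legitimate) gives the class projections, and applying them to $b$ gives every $c_g\in\calC$. The paper runs the same argument on $\calC^\perp$, using several values of $z$ instead of analyticity.

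A minor slip: the class weights in your formulas should be $\sum_{i\in I_h}d_i$ rather than $d_h|I_h|$. Valencies can differ within a class, as for $\Sh$ and $K_4$.
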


\begin{proof}
We separate the boundary classification from the final rank computation.

\smallskip\noindent\emph{The tensor product model.}
Let $\widetilde V=\bigotimes_i W(F_i,r_i)$.  It contains $e_a$ and the
all-ones vector and is invariant under the product adjacency matrix $A$.
It is also invariant under the root projection and the first-sphere
projection.  Consequently $V=\widetilde V\cap e_a^\perp$ is invariant
under $B$ and $E$ and contains the residual all-ones vector.  Its boundary
space is
\[
 \bdy=EV=\Span\{b_i:1\le i\le\ell\}.
\]
Tensoring the identifications in \cref{lem:rooted-radial} identifies
$\widetilde V$ with $L^2(\prod_i\Theta_i,\bigotimes_i\omega_i)$.  In this
model, $e_a$ is the constant function $1$, adjacency is multiplication by
\[
 \tau=X_1+\cdots+X_\ell,
\]
and $b_i$ is the coordinate function $X_i$.  The variables $X_i$ are
independent, centered, and have variances $d_i$.  The residual space is
$1^\perp$, and $B$ is the compression of multiplication by $\tau$ to that
space.  Let $\pi$ be the orthogonal projection onto $\bdy$.

\smallskip\noindent\emph{The boundary exponential kernel.}
By \cref{lem:full-returns}, $\calC$ is also the smallest boundary subspace
containing $b=\tau$ and invariant under all $\pi A^k\pi$.  Equivalently,
it is invariant under $\pi e^{zA}\pi$ for every real $z$.  Put
$\sigma_i=\sqrt{d_i}$ and use the orthonormal basis
$u_i=X_i/\sigma_i$ of $\bdy$.  Independence gives
\begin{equation}\label{eq:kernel}
 \frac{\pi e^{zA}\pi|_{\bdy}}{\prod_i\Phi_i(z)}
 =D(z)+w(z)w(z)^{\mathsf T},\qquad
 D(z)=\diag(\eta_1'(z),\ldots,\eta_\ell'(z)),\quad
 w_i(z)=\sigma_i\eta_i(z).
\end{equation}
Indeed, an off-diagonal entry is
$\Phi_i'/\Phi_i\cdot\Phi_j'/\Phi_j/(\sigma_i\sigma_j)$,
and the $i$th diagonal entry is
$\Phi_i''/(d_i\Phi_i)=\eta_i'+d_i\eta_i^2$.
The denominators in \eqref{eq:kernel} are strictly positive on $\R$.

The vector $w(z)$ belongs to $\calC$ for every real $z$.  To see this,
use $Ae_a=b$ and $\pi e_a=0$ to obtain
\[
 \pi e^{zA}e_a
 =\sum_{k\ge1}\frac{z^k}{k!}\pi A^{k-1}\pi b\in\calC.
\]
Its coordinates are $(\prod_i\Phi_i(z))w(z)$, proving the claim.

\smallskip\noindent\emph{Separation of the boundary classes.}
All operators in \eqref{eq:kernel} are symmetric.  Therefore
$\calC^\perp$, with orthogonal complement taken inside $\bdy$, is invariant
under them.  Since $w(z)\in\calC$, the rank-one term annihilates
$\calC^\perp$, which is consequently invariant under every $D(z)$.
The joint coordinate eigenspaces of the diagonal family $\{D(z):z\in\R\}$
are exactly the coordinate blocks $I_g$.  In fact,
$\eta_i'=\eta_j'$ identically implies $\eta_i=\eta_j$, by
$\eta_i(0)=\eta_j(0)=0$.

For completeness, these joint block projections belong to the algebra
generated by finitely many $D(z)$.  Choose, for every pair of different
classes, a real argument at which their derivatives differ.  The resulting
finite tuples of diagonal values are distinct for different classes.
A real linear combination of the chosen diagonal matrices can be chosen
with distinct values on all classes, since only finitely many proper
hyperplanes of coefficients must be avoided.  Univariate interpolation
then gives each class projection.  Hence $\calC^\perp$ is the orthogonal
direct sum of its intersections with the class coordinate blocks.

In the $u_i$ basis, $b$ is the vector $\sigma=(\sigma_i)_i$.  Since
$b\in\calC$, every class component of $\calC^\perp$ is orthogonal to the
corresponding vector $\sigma_g=(\sigma_i\mathbf1_{i\in I_g})_i$.
It follows that $\calC$ contains every $\sigma_g$.  Conversely,
$S=\Span\{\sigma_g:1\le g\le t\}$ contains $\sigma$, is invariant under
all $D(z)$, and contains all $w(z)$ because the functions $\eta_i$ are
constant within each class.  Equation \eqref{eq:kernel} shows that $S$ is
invariant under every full return, and thus under every residual return.
Minimality yields $\calC\subseteq S$.  Therefore $\calC=S$, which is
\eqref{eq:boundary-classification} in vertex coordinates.

\smallskip\noindent\emph{The spectral fibre formula.}
Adjoin the root to the principal module.  The block form
\eqref{eq:adjacency-block} and \cref{lem:transfer} give
\begin{equation}\label{eq:adjoined}
 \R e_a\oplus P_a
 =\R[A]\Span\{e_a,c_1,\ldots,c_t\}.
\end{equation}
Here is a verification of both inclusions.  The left side is $A$-invariant
and contains all the displayed seeds.  Conversely, the right side contains
$e_a$ and hence is invariant under its orthogonal complement projection;
its residual part is $B$-invariant and contains $\calC$.  It therefore
contains $P_a=\R[B]\calC$.

In the spectral model the seeds in \eqref{eq:adjoined} are
$1,Y_1,\ldots,Y_t$, where $Y_g=\sum_{i\in I_g}X_i$.  Distinct $Y_g$ are
independent, and each has positive weight at every point of $T_g$.
Spectral interpolation in $\tau=\sum_gY_g$ decomposes the right side of
\eqref{eq:adjoined} into its restrictions to the fibres $\tau=\lambda$.
On such a fibre its dimension is the rank of the evaluations of the seed
functions.  The positive spectral weights multiply evaluation rows by
nonzero factors and do not change rank.  Repeated decompositions of a
class sum likewise repeat rows and do not change rank.  Thus the rank is
exactly $\rank R_\lambda$.  Sum over the fibres and remove the
one-dimensional root summand in \eqref{eq:adjoined}.
\end{proof}

\begin{remark}[Finite determination of the classes]\label{rem:finite-classes}
The analytic notation is not an additional infinite-data assumption.
Integrating \eqref{eq:boundary-equivalence}, using $\Phi_i(0)=1$, gives
\begin{equation}\label{eq:Phi-power}
 i\sim j\quad\Longleftrightarrow\quad
 \Phi_i(z)^{d_j}=\Phi_j(z)^{d_i}\quad\text{for all real }z.
\end{equation}
Both sides are finite exponential sums.  Collecting equal exponents reduces
this to finitely many exact coefficient comparisons, because exponential
functions with distinct real exponents are linearly independent.  This
also explains why equal supports without weights are insufficient for
classifying general mixed products.  No polynomial running-time bound
for this expansion is asserted.
\end{remark}

\begin{corollary}\label{cor:fibre-bounds}
Under the hypotheses of \cref{thm:main},
\[
 |\Lambda|-1\le\dim P_a\le
 \sum_{\lambda\in\Lambda}
 \min\{t,\,|\{y\in\textstyle\prod_gT_g:\sum_g y_g=\lambda\}|\}-1.
\]
For a single boundary class, $P_a=\R[B]\one$ and
$\dim P_a=|\Lambda|-1$.
\end{corollary}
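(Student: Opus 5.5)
The plan is to read both bounds directly off the fibre formula \eqref{eq:fibre-main} of \cref{thm:main}, by bounding each $\rank R_\lambda$ from below and from above. For $\lambda\in\Lambda$ the fibre is nonempty by definition of $\Lambda=T_1+\cdots+T_t$, so $R_\lambda$ has at least one row. Every row begins with the entry $1$, so $R_\lambda\ne0$ and $\rank R_\lambda\ge1$. Summing over $\Lambda$ and subtracting one gives the lower bound $|\Lambda|-1$.

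For the upper bound, the rank of $R_\lambda$ is at most its number of rows, namely $|\{y\in\prod_gT_g:\sum_gy_g=\lambda\}|$. It is also at most the dimension of the row space, and I bound that by $t$ rather than by the number of columns $t+1$. The point is that every row lies in the affine hyperplane $\{(x_0,x_1,\ldots,x_t):x_0=1\}$ and also satisfies the linear relation $\sum_{g\ge1}x_g=\lambda x_0$. So all rows lie in the $t$-dimensional linear subspace $\{x:\sum_{g\ge1}x_g=\lambda x_0\}$ of $\R^{t+1}$. Taking the minimum of the two bounds and summing gives the stated upper bound. This constraint step is the only non-immediate point, and it is a one-line observation.

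For a single boundary class, $t=1$ and the upper and lower bounds coincide, so $\dim P_a=|\Lambda|-1$. Alternatively, each $R_\lambda$ then has the single distinct row $(1,\lambda)$. To identify $P_a$ itself, I would use \eqref{eq:boundary-classification}, which gives $\calC=\Span\{c_1\}=\R b$. By \cref{lem:transfer}, $P_a=\R[B]b$, and \cref{prop:deletion} gives $\R[B]b=\R[B]\one$. As a consistency check, \cref{prop:deletion} applied to $G$ gives $\dim\R[B]\one=|\Theta_a|-1$. This matches $|\Lambda|-1$, since the tensor spectral model shows that the local support of $G$ at $a$ is the set of values of $\tau$, which is $\Lambda$ because all product weights are positive.
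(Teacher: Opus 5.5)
Your proposal is correct and follows the paper's proof essentially line by line: nonempty fibres whose rows begin with $1$ give rank at least one, the linear relation $\sum_g y_g=\lambda\cdot 1$ caps the rank at $t$ (the row count caps it trivially), and for one class $\calC=\R b$ together with \cref{lem:transfer} and \cref{prop:deletion} gives $P_a=\R[B]b=\R[B]\one$. Your extra consistency check identifying $\Lambda$ with the local support $\Theta_a$ of $G$ is also correct, but the proof does not need it.
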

\begin{proof}
Every fibre is nonempty and its row matrix has rank at least one.  Its
rows satisfy the linear relation
$-\lambda\cdot1+y_1+\cdots+y_t=0$, so the rank is at most $t$ and at most
the number of rows.  With one class, \cref{thm:main} gives
$\calC=\R b$, and \cref{lem:transfer,prop:deletion} give the cyclic-space
assertion.
\end{proof}

\section{Additive consequences and explicit mixed products}\label{sec:additive}

\subsection{Powers and the diameter bound}
For a finite subset $S\subseteq\R$, write
$mS=\{s_1+\cdots+s_m:s_i\in S\}$, with repetitions allowed.

\begin{lemma}[The equality case for sumset growth]\label{lem:sumset}
If $S\subseteq\R$ has $q\ge2$ elements, then
$|mS|\ge m(q-1)+1$.  For $m\ge2$, equality holds if and only if $S$ is an
arithmetic progression.
\end{lemma}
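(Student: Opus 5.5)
We prove the bound and the equality case by induction on $m$, adding one summand at a time. Order $S=\{s_1<s_2<\cdots<s_q\}$. For $m=1$ we have $|1S|=q$, so the bound holds with equality. For the inductive step, let $M=\max((m-1)S)$. Every element of $(m-1)S+s_1$ lies in $mS$ and is at most $M+s_1$. The $q-1$ elements $M+s_2<\cdots<M+s_q$ also lie in $mS$, and all of them exceed $M+s_1$. Hence
\[
 |mS|\ge|(m-1)S|+q-1,
\]
and induction gives $|mS|\ge m(q-1)+1$.

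The converse direction of the equality case is direct. If $S=\{s+jd:0\le j<q\}$ with $d>0$, then
\[
 mS=\{ms+jd:0\le j\le m(q-1)\},
\]
which has exactly $m(q-1)+1$ elements.

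For the forward direction, we first reduce to $m=2$. Suppose equality holds for some $m\ge2$. Then every step of the induction chain is tight, and the bound applied to $(m-1)S$ is tight as well. In particular, the step from $(m-1)S$ to $mS$ is an equality. So for every $k$ with $2\le k\le m$ we have $|kS|=|(k-1)S|+q-1$. Taking $k=2$ gives $|2S|=2q-1$. It therefore suffices to show that $|2S|=2q-1$ forces $S$ to be an arithmetic progression.

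This step is the main obstacle, and we handle it with two chains. Both of the following are strictly increasing chains of length $2q-1$ in $2S$:
\[
 s_1+s_1<s_1+s_2<s_2+s_2<s_2+s_3<\cdots<s_q+s_q,
\]
\[
 s_1+s_1<s_1+s_2<s_1+s_3<s_2+s_3<s_2+s_4<\cdots.
\]
In the second chain we alternate raising the smaller and the larger index, which interleaves the sums $s_i+s_{i+1}$ and $s_i+s_{i+2}$. The cleaner version to verify is the following. For each $2\le j\le q-1$, compare the first chain with the chain obtained from it by replacing the single term $s_j+s_j$ with $s_{j-1}+s_{j+1}$. Both terms lie strictly between $s_{j-1}+s_j$ and $s_j+s_{j+1}$, so the modified chain is also strictly increasing of length $2q-1$. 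Since $|2S|=2q-1$, both chains enumerate all of $2S$ in increasing order. The two chains agree in every position except the one where the replacement was made, so those entries must coincide:
\[
 s_{j-1}+s_{j+1}=2s_j\qquad(2\le j\le q-1).
\]
These equalities say exactly that consecutive differences of $S$ are constant, so $S$ is an arithmetic progression. Combined with the reduction from general $m$ to $m=2$, this completes the equality case.
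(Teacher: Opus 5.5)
Your proof is correct, and its first two parts coincide with the paper's. Your step $|mS|\ge|(m-1)S|+q-1$ is the paper's chain inequality $|X+Y|\ge|X|+|Y|-1$ with $X=(m-1)S$ and $Y=S$. Your reduction of the equality case to $|2S|=2q-1$ is also the same.

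You differ in how you extract the progression from $|2S|=2q-1$. The paper translates to $s_0=0$ and reuses the single L-shaped chain $0,s_1,\ldots,s_r,s_r+s_1,\ldots,2s_r$ from the bound. Since no element of $2S$ lies strictly between $s_r$ and $s_r+s_1$, each $s_i+s_1$ with $i<r$ must belong to $S$. These $r$ increasing positive elements are then forced to be $s_1,\ldots,s_r$, which gives $s_{i+1}=s_i+s_1$.

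You instead use the staircase chain $2s_1<s_1+s_2<2s_2<\cdots<2s_q$ together with the one-term substitution $2s_j\mapsto s_{j-1}+s_{j+1}$. Uniqueness of the increasing enumeration of a $(2q-1)$-element set then yields the midpoint relations $2s_j=s_{j-1}+s_{j+1}$ directly. The paper's version uses one chain for both halves of the lemma, at the cost of a translation and a small counting step. Yours is a local lattice-path comparison: each relation comes from a single swap, and no global counting is needed.

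One cleanup: your second displayed chain (through $s_1+s_3,s_2+s_3,s_2+s_4,\ldots$) is itself a valid increasing chain of length $2q-1$. Comparing it positionwise with the first already gives all the relations at once. Since you then switch to the substitution argument, that display and its somewhat garbled description are redundant and should either be used or dropped.
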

\begin{proof}
For $X=\{x_0<\cdots<x_r\}$ and $Y=\{y_0<\cdots<y_s\}$, the chain
\[
 x_0+y_0<x_1+y_0<\cdots<x_r+y_0<x_r+y_1<\cdots<x_r+y_s
\]
proves $|X+Y|\ge|X|+|Y|-1$.  Iteration gives the bound.  Equality for
$m\ge2$ forces $|2S|=2|S|-1$.

Translate $S$ to write $S=\{0=s_0<s_1<\cdots<s_r\}$, $r=q-1$.  Under
this equality, the $2r+1$ elements
\[
 0,s_1,\ldots,s_r,s_r+s_1,\ldots,2s_r
\]
exhaust $2S$.  For $0\le i<r$, the element $s_i+s_1$ is smaller than
$s_r+s_1$.  The displayed list has no element strictly between $s_r$ and
$s_r+s_1$, so $s_i+s_1\in S$.  These $r$ positive elements of $S$ are
strictly increasing, and must therefore be $s_1,\ldots,s_r$ in order.
Hence $s_{i+1}=s_i+s_1$.  Conversely an arithmetic progression of length
$q$ has an $m$-fold sumset of length $m(q-1)+1$.
\end{proof}

\begin{theorem}[Powers]\label{thm:powers}
Let $(F,r)$ be a connected regular rooted graph with an equitable distance
partition, let $D=\ecc(r)\ge1$, and let $\Theta$ be its local spectral
support.  For $a=(r,\ldots,r)$ in $F^{\square m}$,
\begin{equation}\label{eq:powers}
 P_a=\R[B]\one,\qquad \dim P_a=|m\Theta|-1\ge mD.
\end{equation}
For $m\ge2$, equality with $mD$ holds if and only if $\Theta$ is an
arithmetic progression.  For a distance-regular graph, $D=\diam(F)$ and
$\Theta$ is the full distinct adjacency spectrum.
\end{theorem}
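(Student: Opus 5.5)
The plan is to specialize \cref{thm:main} and \cref{cor:fibre-bounds} to $m$ identical rooted factors and then invoke \cref{lem:sumset}.

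\emph{Step 1.} Every factor $(F_i,r_i)=(F,r)$ has the same local data $\Phi_i=\Phi$ and $d_i=d$, hence the same $\eta_i$. By \eqref{eq:boundary-equivalence} all indices lie in a single boundary class, so $t=1$. The hypotheses of \cref{thm:main} are exactly those of \cref{lem:rooted-radial}, which are assumed here.

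\emph{Step 2.} The single-class case of \cref{cor:fibre-bounds} gives $P_a=\R[B]\one$ and $\dim P_a=|\Lambda|-1$. Here $\Lambda=T_1=\Theta+\cdots+\Theta=m\Theta$. Alternatively, one can compute this directly from \eqref{eq:fibre-main}: each fibre matrix $R_\lambda$ consists of repeated rows $(1,\lambda)$, so it has rank one.

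\emph{Step 3: the bound.} By \cref{lem:rooted-radial}, $|\Theta|=D+1\ge2$ because $D\ge1$. Then \cref{lem:sumset} with $q=D+1$ gives $|m\Theta|\ge mD+1$. For $m\ge2$, equality holds if and only if $\Theta$ is an arithmetic progression. Subtracting one yields the claimed inequality and its equality case.

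\emph{Step 4: the distance-regular remark.} For distance-regular $F$, the eccentricity of every vertex equals $\diam(F)$. The local support $\Theta$ has $D+1$ points and is contained in the spectrum. A distance-regular graph of diameter $D$ has exactly $D+1$ distinct eigenvalues, so $\Theta$ is the whole spectrum. This is the standard fact that the distance algebra has dimension $D+1$; equivalently, each primitive idempotent has nonzero diagonal $m_\theta/n$ at every vertex.

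There is no substantial obstacle, since the work was done in \cref{thm:main}. The only points needing care are confirming that identical factors fall into one class (immediate from the definition) and the spectral identification in the distance-regular case.
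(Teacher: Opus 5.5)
Your proposal is correct and follows the paper's proof essentially step for step: all factors fall into a single boundary class, \cref{cor:fibre-bounds} gives $P_a=\R[B]\one$ with $\dim P_a=|m\Theta|-1$, and \cref{lem:rooted-radial} together with \cref{lem:sumset} gives the bound and its equality case. The only difference is in the distance-regular remark: the paper derives a degree-$(D+1)$ annihilating polynomial of $A$ from the recurrence $AA_i=b_{i-1}A_{i-1}+a_iA_i+c_{i+1}A_{i+1}$, whereas you cite the dimension of the Bose--Mesner algebra or the constant diagonals $m_\theta/n$ of the primitive idempotents; both are valid.
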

\begin{proof}
Every factor has the same rooted local measure, so there is a single
boundary class.  Apply \cref{cor:fibre-bounds} and then
\cref{lem:rooted-radial,lem:sumset}.  For a distance-regular graph, write $A_i$ for its distance-$i$ matrix.
The recurrence
$AA_i=b_{i-1}A_{i-1}+a_iA_i+c_{i+1}A_{i+1}$ expresses $A_i$ as a
polynomial of degree $i$ in $A$.  At $i=D$ it gives
$(A-a_DI)A_D-b_{D-1}A_{D-1}=0$, a polynomial relation of degree $D+1$.
The root cyclic module already has dimension $D+1$, so the local
support is the full distinct spectrum.
\end{proof}

\begin{example}[The minimum dimension need not imply distance-regularity]
For $r\ge3$, the graph $F=K_{r,r}$ is distance-regular of diameter two and
has spectrum $\{r,0,-r\}$.  Therefore
$\dim P_a(F^{\square m})=2m$.  For $m\ge2$ the product is not
distance-regular: at distance two from the root, a vertex whose factor
distances are $(2,0,\ldots,0)$ has $r$ neighbors in the preceding sphere,
whereas one of type $(1,1,0,\ldots,0)$ has two.  Thus minimum growth of the
principal module is weaker than distance-regularity of the product.
\end{example}

\begin{example}[A non-arithmetic spectrum]
The five-cycle has distinct spectrum
$\{2,(\sqrt5-1)/2,-(\sqrt5+1)/2\}$.  Its six unordered two-term sums are
distinct.  Hence $\dim P_a(C_5\square C_5)=5$, although the product has
diameter four.
\end{example}

\subsection{Two complete-graph classes}
The fibre formula has a particularly explicit interpretation when there
are two boundary classes.  Each fibre lies on a line, so its rank is one
for a single point and two for two or more distinct points.  Consequently
the first loss occurs at a fibre containing three points.

\begin{theorem}[Mixed complete-graph powers]\label{thm:complete}
Let $p,q\ge2$ be distinct integers, let $m,n\ge1$, and put
\[
 G=K_p^{\square m}\square K_q^{\square n},\qquad
 g=\gcd(p,q),\quad p'=p/g,\quad q'=q/g.
\]
Then, at every root,
\begin{align}
 \dim P_a(G)
 &= (m+1)(n+1)-(m-2q'+1)_+(n-2p'+1)_+-1,\label{eq:complete-dim}\\
 \defect(G,a)
 &= (m-2q'+1)_+(n-2p'+1)_+.\label{eq:complete-defect}
\end{align}
In particular $P_a=U_a^-$ if and only if $m<2q'$ or $n<2p'$.
\end{theorem}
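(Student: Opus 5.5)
The plan is to apply \cref{thm:main} with the $m+n$ complete-graph factors and then count the rank losses on fibres directly. First compute the rooted local data of $K_p$. Its distance partition at any root is $\{r\},V\setminus\{r\}$, which is trivially equitable. The local support is $\{p-1,-1\}$ with weights $1/p$ and $(p-1)/p$, so
\[
\Phi_p(z)=\bigl(e^{(p-1)z}+(p-1)e^{-z}\bigr)/p .
\]
Copies of $K_p$ share one function $\eta$. For $p\ne q$ the functions $\eta_p$ and $\eta_q$ differ, since by \cref{rem:finite-classes} equality would force $\Phi_p^{\,q-1}=\Phi_q^{\,p-1}$, and the largest exponents $(p-1)(q-1)$ agree while the smallest, $-(q-1)$ versus $-(p-1)$, do not. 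Hence there are exactly $t=2$ boundary classes, with
\[
T_1=m\{p-1,-1\}=\{kp-m:0\le k\le m\},\qquad T_2=\{lq-n:0\le l\le n\}.
\]
These are indexed bijectively by $k$ and $l$. Vertex-transitivity of $G$ gives the phrase ``at every root''.

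Next evaluate the fibre formula \eqref{eq:fibre-main}. A fibre $\lambda$ is the set of grid points $(k,l)\in[0,m]\times[0,n]$ with $kp+lq$ fixed. As noted before the theorem, its rows $(1,y_1,y_2)$ lie on a line, so $\rank R_\lambda=\min\{2,s_\lambda\}$, where $s_\lambda$ is the fibre size. Summing,
\[
\dim P_a=(m+1)(n+1)-\sum_\lambda(s_\lambda-2)_+-1 .
\]
The integer solutions of $kp+lq=c$ form a progression $(k_0+jq',\,l_0-jp')$, so each fibre is a run of consecutive points along that progression inside the box. The quantity $(s_\lambda-2)_+$ equals the number of points in the fibre having two predecessors in the fibre. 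A point $(k,l)$ has two predecessors exactly when $(k-2q',\,l+2p')$ lies in the box, that is, when $k\ge 2q'$ and $l\le n-2p'$. The number of such points is $(m-2q'+1)_+(n-2p'+1)_+$, which proves \eqref{eq:complete-dim}. The main care needed here is checking that fibres are genuinely contiguous runs in the box, which holds because the box is convex along the progression direction.

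For \eqref{eq:complete-defect}, compute $\dim U_a^-$. Both $K_p$ and $K_q$ are Cartesian prime and non-isomorphic, so by the automorphism theorem for prime factorizations~\cite{HIK2011},
\[
\Aut(G)=(S_p\wr S_m)\times(S_q\wr S_n).
\]
The stabilizer of $a$ acts on the vertices through $S_{p-1}\wr S_m$ and $S_{q-1}\wr S_n$. Its orbits are therefore classified by the pair $(k,l)$ of numbers of coordinates differing from the root in each block, and all $(m+1)(n+1)$ pairs occur. Removing the root gives $\dim U_a^-=(m+1)(n+1)-1$, and subtracting \eqref{eq:complete-dim} yields the defect. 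The final equivalence follows because the defect vanishes exactly when one of the two factors $(\cdot)_+$ is zero, together with $P_a\subseteq U_a^-$ from \cref{lem:automorphisms}. The step I expect to need the most attention is the precise form of the automorphism group when $p$ or $q$ equals $2$. $K_2$ is still prime, and the factors remain non-isomorphic, so the wreath-product description should survive.
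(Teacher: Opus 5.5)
Your proposal is correct and follows essentially the same route as the paper. You use two boundary classes, rank $\min\{2,s_\lambda\}$ on each fibre, and a count of points with two predecessors along the primitive step $(q',-p')$, where the paper counts starting points of consecutive triples (the same number); the orbit count $(m+1)(n+1)-1$ from the Cartesian automorphism theorem also matches, and the only difference is that you separate the classes by comparing extreme exponents in $\Phi_p^{\,q-1}=\Phi_q^{\,p-1}$ via \cref{rem:finite-classes}, whereas the paper uses $\eta_q''(0)=q-2$, and both work.
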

\begin{proof}
At any root of $K_q$, the local measure has weights $1/q$ and $(q-1)/q$
at $q-1$ and $-1$, respectively.  Direct differentiation gives
\[
 \Phi_q(z)=\frac{e^{(q-1)z}+(q-1)e^{-z}}{q},\qquad
 \eta_q(z)=\frac{e^{qz}-1}{e^{qz}+q-1},\qquad
 \eta_q''(0)=q-2.
\]
Thus the $K_p$ copies form one boundary class and the $K_q$ copies form
the other.  The class support pairs are indexed by the rectangle
\[
 \mathcal R=\{(i,j):0\le i\le m,\ 0\le j\le n\},
\]
where
$y_1=m(p-1)-pi$ and $y_2=n(q-1)-qj$.
Fixing their sum is equivalent to fixing $pi+qj$.
The integral points in a nonempty fibre are consecutive points on an
arithmetic path with step $(q',-p')$.  To justify consecutiveness, any two
integral solutions differ by an integral multiple of this primitive step,
and all intermediate points remain inside the rectangle by convexity.

If a fibre has $r$ points, its matrix in \eqref{eq:fibre-rows} has rank
$\min\{2,r\}$: two different points give independent rows because the
first coordinate is $1$.  The loss from the total number of rectangle
points is therefore $\sum(r-2)_+$ over the fibres.  A path of $r$ points
contains exactly $(r-2)_+$ triples of consecutive points.  Such triples
are uniquely specified by a starting point $(i,j)$ satisfying
\[
 0\le i\le m-2q',\qquad 2p'\le j\le n.
\]
Their total number is $(m-2q'+1)_+(n-2p'+1)_+$.  Substitution in
\cref{thm:main} proves \eqref{eq:complete-dim}.

A complete graph with at least two vertices is Cartesian-prime: a product
with two nontrivial factors contains pairs differing in two coordinates
that are not adjacent.  By the Cartesian automorphism theorem
\cite{HIK2011}, automorphisms of $G$ only permute the $K_p$ coordinates
among themselves and the $K_q$ coordinates among themselves, besides
acting within coordinates.  At a fixed root, an orbit is determined by
the number of non-root entries in each of these two blocks.  Thus
$\dim U_a^-=(m+1)(n+1)-1$.  Subtract \eqref{eq:complete-dim}; the last
assertion uses $P_a\subseteq U_a^-$ from \cref{lem:automorphisms}.
\end{proof}

\begin{example}
For $K_2^{\square3}\square K_3^{\square2}$, the adjacency cyclic space has
dimension $10$, whereas $\dim P_a=\dim U_a^-=11$.  There are twelve
rectangle points and exactly one two-point fibre, so the second boundary
class resolves the collision.  For
$K_2^{\square6}\square K_3^{\square4}$, the rectangle has $35$ points;
there is exactly one triple of consecutive points on its spectral
fibres.  Here $\dim P_a=33$ and $\dim U_a^-=34$.  The obstruction is
therefore not the existence of collisions, but insufficient affine rank
within a collision fibre.
\end{example}

\section{Distance-regular graphs and scalar moment reconstruction}\label{sec:dr}

Let $G$ be distance-regular of valency $d$ and diameter $D\ge1$.  We use
the intersection numbers $b_i,c_i,a_i$, where a vertex at distance $i$
from a root has respectively $b_i,c_i,a_i$ neighbors at distances
$i+1,i-1,i$.  Thus
\[
 c_0=0,\quad b_0=d,\quad c_1=1,\quad b_D=0,\quad
 a_i=d-b_i-c_i.
\]
For a fixed root put $f_i=\one_{S_i(a)}$ and $\kappa_i=|S_i(a)|$.
Counting edges between consecutive spheres gives
$\kappa_i b_i=\kappa_{i+1}c_{i+1}$.

\begin{proposition}[The primary punctured module]\label{prop:dr-module}
For every root of a distance-regular graph,
\begin{equation}\label{eq:dr-module}
 P_a=\R[B]\one=\Span\{f_1,\ldots,f_D\},\qquad
 \Balg_a\big|_{P_a}=\End(P_a)\cong M_D(\R).
\end{equation}
On this module,
\begin{equation}\label{eq:dr-action}
 Bf_i=b_{i-1}f_{i-1}+a_if_i+c_{i+1}f_{i+1},\qquad
 E=\frac{bb^{\mathsf T}}d,
\end{equation}
with the terms outside $1\le i\le D$ omitted.
\end{proposition}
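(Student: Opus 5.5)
The plan is to establish the action formulas \eqref{eq:dr-action} on $W=\Span\{f_1,\ldots,f_D\}$ first, then identify $P_a$ with $W$ by a dimension count, and finally show irreducibility.

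\emph{Step 1: the action on $W$.} For $v\in S_j(a)$ with $j\ge1$, the entry $(Bf_i)_v$ counts the neighbors of $v$ in $S_i(a)$ inside $G-a$. This count is $c_j$ if $i=j-1$, $a_j$ if $i=j$, $b_j$ if $i=j+1$, and $0$ otherwise. The only difference from $A$ is that $f_0=e_a$ is discarded, and this is the effect of deleting the root. Hence
\[
 Bf_i=c_{i+1}f_{i+1}+a_if_i+b_{i-1}f_{i-1},
\]
for $1\le i\le D$, with $f_0$ and $f_{D+1}$ omitted. The indexing matches the displayed formula, since the coefficient of $f_{i-1}$ in $Bf_i$ is the number of neighbors that a vertex of $S_{i-1}$ has in $S_i$, namely $b_{i-1}$. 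For $E$ we have $Ef_i=\delta_{i1}f_1$ and $b=f_1$. Also $bb^{\mathsf T}f_i/d=\delta_{i1}\kappa_1f_1/d=\delta_{i1}f_1$, because $\kappa_1=d$. So $E=bb^{\mathsf T}/d$ on $W$. In particular $W$ is $B,E$-invariant and contains $\one=\sum_{i\ge1}f_i$, so $P_a\subseteq W$.

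\emph{Step 2: $P_a=\R[B]\one=W$.} By \cref{prop:deletion}, $\dim\R[B]\one=|\Theta_a|-1$. Since a distance-regular graph has an equitable distance partition at every root, \cref{lem:rooted-radial} gives $|\Theta_a|=D+1$, so $\dim\R[B]\one=D$. One can also argue directly: $\R[B]b$ contains $B^{k}f_1$, whose top component is $f_{k+1}$ with coefficient $c_2\cdots c_{k+1}>0$, so a triangularity argument gives all of $W$. The chain $\R[B]\one\subseteq P_a\subseteq W$ then has equal dimensions at both ends, so all three coincide.

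\emph{Step 3: $\Balg_a|_W=\End(W)$.} I would use a matrix-unit argument. The restriction of $E$ is the rank-one projection onto $\R f_1$. By Step 2, $f_1$ is cyclic for $B$, so the operators $B^kE$ send $f_1$ to a spanning set of $W$. Since $W$ is reducing for the symmetric generators, the transpose operators $EB^k$ (with respect to the Euclidean inner product) restricted to $W$ realize every functional $x\mapsto\ip{B^kf_1}{x}f_1$. The products
\[
 B^jE B^k\colon x\mapsto \ip{B^kf_1}{x}\,B^jf_1
\]
are rank-one operators $u\otimes v^{\mathsf T}$ with $u$ and $v$ ranging over spanning sets of $W$. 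Their span is therefore all of $\End(W)\cong M_D(\R)$.

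The only step requiring care is the bookkeeping of intersection-number indices in Step 1, and the fact that $\kappa_1=d$ makes the projection formula for $E$ exact. Steps 2 and 3 are routine once cyclicity is known.
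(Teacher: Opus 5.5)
Your proposal is correct and follows essentially the same route as the paper. The action formulas come from the intersection-number recurrence with $f_0$ discarded and $\kappa_1=d$. The identification $P_a=\R[B]\one=\Span\{f_1,\ldots,f_D\}$ comes from \cref{lem:rooted-radial} and \cref{prop:deletion}, and your triangularity argument is a direct substitute for that step. The full image $\End(P_a)$ comes from the rank-one operators $B^jEB^k=\frac1d(B^jb)(B^kb)^{\mathsf T}$ on the module, exactly as in the paper; by working with the Euclidean inner product rather than sphere-basis coordinates, you automatically avoid the Gram-matrix issue the paper flags for the unnormalized basis.
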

\begin{proof}
The first equality and the dimension follow from
\cref{lem:rooted-radial,prop:deletion}; equitability also gives the
$B,E$-invariance of the residual sphere span.  The first sphere indicator
is $b$, of squared norm $d$, which proves the formula for $E$.  The
adjacency action is the usual intersection-number recurrence.

In the orthonormal basis $f_i/\sqrt{\kappa_i}$, $B$ is symmetric
tridiagonal with positive off-diagonal entries and $E$ is the projection
onto the first coordinate.  Thus $b,Bb,\ldots,B^{D-1}b$ are a basis.
The operators
\[
 B^rEB^s=\frac{1}{d}(B^rb)(B^sb)^{\mathsf T},\qquad0\le r,s<D,
\]
span all endomorphisms of $P_a$.  This proves the full-image assertion.
The use of an orthonormal basis here is important: the unnormalized
sphere basis has Gram matrix $\diag(\kappa_1,\ldots,\kappa_D)$.
\end{proof}

\subsection{Reconstructing the intersection array}
Define the scalar adjacency moments at a puncture by
\begin{equation}\label{eq:scalar-moments}
 s_j(G,a)=\one^{\mathsf T}B^j\one\qquad(j\ge0).
\end{equation}
For comparison, a principal adjacency--degree moment is
$\one^{\mathsf T}w(B,D(G-a))\one$, where $w$ is a word in two
noncommuting variables, including the empty word.

\begin{theorem}[Adjacency moments suffice]\label{thm:moments}
Let $G,G'$ be connected distance-regular graphs of positive diameter,
with arbitrary roots.  The following are equivalent: they have the same
intersection array; all moments in \eqref{eq:scalar-moments} are equal;
and all principal adjacency--degree moments are equal.
Moreover, among such graphs of diameter at most $L$, equality of
\begin{equation}\label{eq:finite-moments}
 s_j\qquad(0\le j\le2L-1)
\end{equation}
already implies equality of the intersection arrays.
\end{theorem}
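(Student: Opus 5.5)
Our plan is to reduce everything to the tridiagonal model of \cref{prop:dr-module} and then run a Lanczos (Stieltjes) reconstruction on the punctured all-ones seed. By \cref{prop:dr-module}, $P_a=\Span\{f_1,\ldots,f_D\}$. $B$ acts by the three-term recurrence \eqref{eq:dr-action}, and $E=bb^{\mathsf T}/d$ on $P_a$. Consequently every principal adjacency--degree moment $\one^{\mathsf T}w(B,D(G-a))\one$ is a polynomial expression in the intersection numbers alone. Here $D(G-a)=dI-E$, and the numbers $\kappa_i$ are determined by $\kappa_0=1$ and $\kappa_ib_i=\kappa_{i+1}c_{i+1}$. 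The implication ``same array $\Rightarrow$ equal principal moments'' is then immediate. Equal principal moments imply equal scalar moments, because the words in $B$ alone form a subfamily. It therefore remains to reconstruct the array from $s_0,\ldots,s_{2L-1}$ when $D\le L$. The statement for all moments follows by taking $L=\max(D,D')$.

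For the reconstruction we use the positive functional $\mu(p)=\one^{\mathsf T}p(B)\one$ on $\R[x]$. Since $\R[B]\one=P_a$ has dimension $D$ and $B|_{P_a}$ is symmetric, $\mu$ is the moment functional of a discrete positive measure with exactly $D$ support points. In the orthonormal basis $g_i=f_i/\sqrt{\kappa_i}$, the seed is $\one=\sum_i\sqrt{\kappa_i}\,g_i$. This is not the first basis vector, so the Jacobi matrix of $\mu$ is not directly the matrix of \eqref{eq:dr-action}. We therefore plan to pass to $b$ instead. The identity $(dI-B)\one=b$ from \cref{prop:deletion} gives
\[
 b^{\mathsf T}B^jb=s_j(d^2)-2s_{j+1}d+s_{j+2}
\]
(interpreting appropriately). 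In the basis $g_i$ we have $b=\sqrt d\,g_1$, so the moments $m_j=b^{\mathsf T}B^jb$ are $d$ times the moments of the first basis vector of the tridiagonal matrix $J$. The matrix $J$ has diagonal $a_1,\ldots,a_D$ and off-diagonal entries $\sqrt{b_ic_{i+1}}$. The Lanczos algorithm recovers $J$ uniquely from $m_0,\ldots,m_{2D-1}$. Since $c_1=1$, the products $b_ic_{i+1}$ and the numbers $a_i$ then recover the array recursively: $d=b_0$ is known, $c_i$ follows from $a_i$ and $b_i$, and $b_i$ is read off from $b_{i-1}c_i$, with $D$ detected as the step where Lanczos terminates.

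The main obstacle is the moment budget. Expressing $m_j$ through $s$ costs two extra indices, and $d$ itself must first be extracted. The number $d$ is obtained directly from $s_0=|V|-1$ and $s_1=2|E(G-a)|=(|V|-1)d-d$, so $d=s_1/(s_0-1)$ when $s_0>1$; this is valid since $|V|\ge2$, with a separate trivial check for $K_2$. To avoid the index shift, we will instead run Lanczos directly on $\mu$ with seed $\one$. That needs only $s_0,\ldots,s_{2D-1}$ and yields a Jacobi matrix $J'$ unitarily equivalent to $B|_{P_a}$ via an orthogonal change of basis sending $\one/\norm{\one}$ to $e_1$. From $J'$ we recover the spectral measure of $b=(dI-B)\one$ as $(d-x)^2\,d\mu$, a finite computation on the $D$ support points. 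A second Lanczos run on this measure gives $J$.

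We expect the delicate point to be confirming that $D$ is read off correctly, and that $2L-1$ moments suffice when $D<L$. The Hankel determinants of $\mu$ vanish from order $D+1$ on. With only the moments up to $2L-1$ available, the Hankel matrix of size $L$ is still fully determined, so the rank equal to $D$ is detected and the support points of $\mu$ are the roots of the $D$th orthogonal polynomial. The remaining checks are this rank argument and the positivity of all $b_ic_{i+1}$, which guarantees that the reconstruction is unique.
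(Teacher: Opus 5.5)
Your proof is correct. It follows a different route from the paper's, and one sentence of the final recursion needs its order fixed.

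\textbf{How the paper does it.} The paper never runs Lanczos and never determines $D$ in advance. It notes that $H=(s_{i+j})_{0\le i,j<L}$ is the Gram matrix of $u,Bu,\ldots,B^{L-1}u$, and that $K=(s_{i+j+1})_{0\le i,j<L}$ records the pairings with $B$ inserted. Equal moments therefore give a well-defined surjective isometry $U\colon P_a\to P_{a'}$ with $Uu=u'$ and $UB=B'U$. The case $D<L$ is absorbed into the common null space of $H$. Every intrinsic construction in the pointed space is then transported at once, including $b=(dI-B)u$, so the index shift you worried about never arises. The array is read from the sphere recurrence started at $f_1=b$. There, $c_{i+1}$ is separated from $\kappa_{i+1}$ by pairing the residual $v_i=c_{i+1}f_{i+1}$ with $u$.

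\textbf{How your route differs.} You prove the same underlying fact, that $s_0,\ldots,s_{2L-1}$ determine the pointed cyclic operator $(P_a,B,\one)$ up to unitary equivalence, by orthogonal-polynomial machinery. The steps are:
\begin{enumerate}
\item $\rank H$ detects $D$. It is the a priori bound $D\le L$ that makes full rank conclusive, since detecting Lanczos termination at $D=L$ would need $s_{2L}$.
\item Lanczos on $\mu$ with $s_0,\ldots,s_{2D-1}$ pins down the finitely supported measure, and hence all higher $s_j$.
\item The modified measure $(d-x)^2\,d\mu$ is the spectral measure of $b$. A second Lanczos run on it returns the Jacobi matrix $J$ at $g_1$.
\item You split the products $b_ic_{i+1}$ using $a_i+b_i+c_i=d$ and $c_1=1$, rather than pairing with $u$.
\end{enumerate}

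\textbf{What each route buys.} Yours is explicitly algorithmic and exhibits the Jacobi matrix at the first sphere directly. The paper's isometry is shorter and needs neither the rank detection nor the moment extension.

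\textbf{The recursion.} It should read: start from $c_1=1$, then set $b_i=d-a_i-c_i$ and $c_{i+1}=(b_ic_{i+1})/b_i$, using $b_i>0$ for $i<D$. As you wrote it (``$c_i$ follows from $a_i$ and $b_i$, and $b_i$ is read off from $b_{i-1}c_i$''), the order is reversed.
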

\begin{proof}
An intersection array determines all sphere sizes, by
$\kappa_0=1$ and $\kappa_{i+1}=\kappa_i b_i/c_{i+1}$, and determines the
operators in \eqref{eq:dr-action}.  It therefore determines every stated
moment.  Equality of all adjacency--degree moments includes equality of
all adjacency moments.  It remains to establish the finite assertion.

Let $u=\one$ on the deletion and form the $L\times L$ matrices
\[
 H=(s_{i+j})_{0\le i,j<L},\qquad
 K=(s_{i+j+1})_{0\le i,j<L}.
\]
The first is the Gram matrix of $u,Bu,\ldots,B^{L-1}u$; the second gives
all pairings with $B$ inserted.  By \cref{prop:dr-module}, these vectors
span $P_a$, and $\rank H=D$.  Equality of the moments for the two graphs
makes
\[
 U\Big(\sum_{i=0}^{L-1}\alpha_iB^iu\Big)
   =\sum_{i=0}^{L-1}\alpha_i(B')^iu'
\]
a well-defined surjective isometry.  Equality of $K$ implies
$UB|_{P_a}=B'|_{P_{a'}}U$, since the pairings agree on spanning sets.
Also $Uu=u'$.  This argument includes the case $D<L$, because the null
space of the common Gram matrix accounts for all relations.

The number of vertices is $v=s_0+1$.  If $v=2$, the graph is $K_2$ and the
array is fixed.  If $v\ge3$, deletion removes $d$ edges from a
$d$-regular graph, so
\begin{equation}\label{eq:degree-reconstruct}
 d=\frac{s_1}{v-2}.
\end{equation}
Consequently $b=(dI-B)u$ is determined by the pointed operator $(P_a,B,u)$.
We now reconstruct all intersection numbers and sphere indicators
intrinsically in this pointed space.  Start with
\[
 f_1=b,\qquad\kappa_1=d,\qquad c_1=1.
\]
Given $f_i$ and the preceding data, set
\begin{equation}\label{eq:recover-ai}
 a_i=\frac{\ip{f_i}{Bf_i}}{\kappa_i},\qquad
 b_i=d-a_i-c_i.
\end{equation}
For $i<D$, subtract the known terms in the recurrence:
\[
 v_i=Bf_i-a_if_i-b_{i-1}f_{i-1}=c_{i+1}f_{i+1},
\]
where the last subtraction is omitted for $i=1$.  Since
$c_{i+1},\kappa_{i+1}>0$,
\begin{equation}\label{eq:recover-ci}
 c_{i+1}=\frac{\norm{v_i}^{2}}{\ip{u}{v_i}},\qquad
 \kappa_{i+1}=\frac{\ip{u}{v_i}^{2}}{\norm{v_i}^{2}},\qquad
 f_{i+1}=v_i/c_{i+1}.
\end{equation}
At $i=D$, the corresponding residual vector is zero.  All quantities in
\eqref{eq:degree-reconstruct}--\eqref{eq:recover-ci} are preserved by $U$.
Thus the common finite moments determine the entire array.
\end{proof}

\begin{remark}
The theorem concerns moments, not merely the spectrum of $B$.  The
weights of the seed $\one$ in the spectral decomposition matter.  It
also does not distinguish nonisomorphic distance-regular graphs having
the same intersection array.  In particular, principal moments cannot
separate a Doob graph from the Hamming graph with the same array.
\end{remark}

\section{Compressed Terwilliger corners and Hamming graphs}\label{sec:corners}

For any connected rooted graph, let $E_i^*$ be the diagonal projection
onto vertices at distance $i$ from the root, and let
\[
 \Talg(a)=\langle A,E_0^*,\ldots,E_D^*\rangle,
 \qquad D=\ecc(a).
\]
This is the Terwilliger, or subconstituent, algebra.  Put $e=I-E_0^*$ and
identify the corner $e\Talg(a)e$ with operators on the residual vertex
space.  In particular $E_1^*|_{e\R^{V(G)}}=E$.

\begin{lemma}[What a puncture omits]\label{lem:corner}
For a connected regular graph,
\begin{equation}\label{eq:corner-generators}
 e\Talg(a)e
   =\langle B,E_1^*|_e,\ldots,E_D^*|_e,bb^{\mathsf T}\rangle.
\end{equation}
If $G-a$ is connected, then $bb^{\mathsf T}\in\Balg_a$ and hence
\begin{equation}\label{eq:corner-distance}
 e\Talg(a)e=\langle\Balg_a,E_2^*|_e,\ldots,E_D^*|_e\rangle.
\end{equation}
In this case, equality with $\Balg_a$ holds if and only if all residual
distance projections belong to $\Balg_a$.
\end{lemma}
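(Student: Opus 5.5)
Since $E_0^*$ and $e=I-E_0^*$ are idempotents in $\Talg(a)$, the corner $e\Talg(a)e$ is spanned by the compressions $e\,w\,e$ of words $w$ in $A,E_0^*,\ldots,E_D^*$. I would establish \eqref{eq:corner-generators} by showing that every such compression lies in the displayed algebra, and conversely.

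For the forward inclusion, write each $E_i^*$ with $i\ge1$ as $eE_i^*e$, and $E_0^*=e_ae_a^{\mathsf T}$. Insert $I=e+E_0^*$ between consecutive letters of $w$. Then $e\,w\,e$ expands into products of blocks of the form $eAe=B$, $eE_i^*e=E_i^*|_e$, and factors passing through the root. Using the block form \eqref{eq:adjacency-block}, we have $eAE_0^*=be_a^{\mathsf T}$, $E_0^*Ae=e_ab^{\mathsf T}$, and $E_0^*AE_0^*=0$. Hence every excursion through the root, after compressing, contributes either zero or a factor $b\,(\text{scalar})\,b^{\mathsf T}$. More precisely, a maximal segment $e X E_0^* Y E_0^* \cdots E_0^* Z e$ becomes a scalar multiple of a product containing $bb^{\mathsf T}$, because $e_a^{\mathsf T}(\cdots)e_a$ is a scalar. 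So the compressed word lies in $\langle B,E_i^*|_e,bb^{\mathsf T}\rangle$.

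For the converse inclusion, we have $B=eAe$ and $E_i^*|_e=eE_i^*e$, and $bb^{\mathsf T}=eAE_0^*Ae$ lies in $e\Talg(a)e$. This proves \eqref{eq:corner-generators}.

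For the second claim, assume $G-a$ is connected. I will show that $bb^{\mathsf T}\in\Balg_a$; then \eqref{eq:corner-distance} follows, since $E_1^*|_e=E\in\Balg_a$. The key step is the following. Since $G-a$ is connected and $E\ne0$, the operator $dI-B$ is positive definite by \cref{prop:deletion}, and the kernel-of-Laplacian argument there applies. I would instead work with the operator $J$ restricted to the residual space. On a connected graph $G-a$, the algebra $\R[B,D(G-a)]$ contains the Laplacian $L=D(G-a)-B$. The matrix $L$ is symmetric with one-dimensional kernel spanned by $\one$ (this is where connectivity of $G-a$ is used). Hence the spectral projection of $L$ onto eigenvalue $0$, namely $\one\one^{\mathsf T}/(v-1)$, is a polynomial in $L$ and lies in $\Balg_a$ (recall $D(G-a)=dI-E$). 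Then
\[
 bb^{\mathsf T}=(dI-B)\,\one\one^{\mathsf T}\,(dI-B)\in\Balg_a,
\]
using $(dI-B)\one=b$ from \cref{prop:deletion}.

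The final equivalence is then immediate. If all residual distance projections lie in $\Balg_a$, then \eqref{eq:corner-distance} gives equality. Conversely, if the two algebras are equal, each $E_i^*|_e=eE_i^*e$ lies in $e\Talg(a)e=\Balg_a$.

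The main obstacle I expect is the bookkeeping in the forward inclusion, namely verifying that root excursions really factor through $bb^{\mathsf T}$ and not through $be_a^{\mathsf T}$ alone. This holds because the outer factors $e$ on both sides force every root visit to be entered and exited via $A$.
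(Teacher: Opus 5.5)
Your proposal is correct and follows essentially the same route as the paper. You insert $I=e+E_0^*$ between letters so that each root excursion collapses to a scalar multiple of $bb^{\mathsf T}=eAE_0^*Ae$, and you obtain $\one\one^{\mathsf T}$ as the kernel projection of the residual Laplacian $D(G-a)-B$. The only differences are cosmetic: you write $bb^{\mathsf T}=(dI-B)\one\one^{\mathsf T}(dI-B)$ where the paper writes $E\one\one^{\mathsf T}E$ (both are valid), and your aside about positive definiteness of $dI-B$ is not needed.
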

\begin{proof}
Insert $I=e+E_0^*$ between all factors in a compressed word in the
generators of $\Talg(a)$.  A term that stays in the residual space is a
word in $B$ and the residual distance projections.  A term visiting the
root uses only its one-dimensional space; entering and leaving it gives
\[
 eAE_0^*Ae=bb^{\mathsf T}.
\]
Since $E_0^*AE_0^*=0$, any nonzero term is a product of residual words and
such rank-one passages.  This proves inclusion in the right side of
\eqref{eq:corner-generators}; the reverse inclusion follows from the
displayed expressions for its generators.

If $G-a$ is connected, its Laplacian
$L_-=D(G-a)-B$ belongs to $\Balg_a$ and has kernel $\R\one$.
Spectral interpolation gives its orthogonal kernel projection
$\one\one^{\mathsf T}/(|V(G)|-1)$ as a polynomial in $L_-$.  Thus
$J_-=\one\one^{\mathsf T}\in\Balg_a$, and
$bb^{\mathsf T}=EJ_-E\in\Balg_a$.  Equation \eqref{eq:corner-distance}
and the criterion follow.
\end{proof}

\begin{corollary}\label{cor:srg-corner}
A distance-regular graph of positive diameter has a connected vertex
deletion at every root.  Its punctured algebra equals the compressed
Terwilliger algebra if its diameter is at most two.  In particular this
holds for every connected nontrivial strongly regular graph.
\end{corollary}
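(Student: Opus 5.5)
The plan is to establish connectivity of $G-a$ by a short distance argument and then read off the equality from \cref{lem:corner}. The only property of a distance-regular graph used is that $b_i\ge1$ for $0\le i<D$.

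\emph{Connectivity of the deletion.} If $|V(G)|=2$, then $G-a$ is a single vertex and is connected. Otherwise, suppose toward a contradiction that $G-a$ is disconnected. Since $G$ is connected, every component of $G-a$ contains a neighbor of $a$. Choose two distinct components $C$ and $C'$, and a neighbor $y\in N(a)\cap C'$.
\begin{itemize}
\item Every path from $y$ to a vertex $z\in C$ passes through $a$. Hence $\partial(y,z)=\partial(a,z)+1$ for all $z\in C$.
\item Choose $z\in C$ with $i=\partial(a,z)$ maximal among the vertices of $C$; note $i\ge1$.
\item Since $\partial(y,z)=i+1\le D$, we have $i\le D-1$, so $b_i\ge1$. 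Thus $z$ has a neighbor $z'$ with $\partial(a,z')=i+1$.
\item The vertex $z'$ is adjacent to $z$ and differs from $a$, so $z'\in C$. This contradicts the maximality of $i$.
\end{itemize}
Hence $G-a$ is connected at every root. This step is the only nonformal point. The obvious alternative, arguing through common neighbors of two vertices of $N(a)$, fails when $c_2=1$, so I would use this extremal-distance argument instead.

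\emph{Equality for small diameter.} Because $G-a$ is connected, \cref{lem:corner} gives $e\Talg(a)e=\langle\Balg_a,E_2^*|_e,\ldots,E_D^*|_e\rangle$. Equality with $\Balg_a$ therefore holds as soon as every residual distance projection lies in $\Balg_a$.
\begin{itemize}
\item If $D=1$, there are no residual projections beyond $E_1^*|_e$, and $E_1^*|_e=E$.
\item If $D=2$, the residual vertex set is $S_1(a)\sqcup S_2(a)$. Then $E_1^*|_e=E$ and $E_2^*|_e=I-E$, and both lie in $\Balg_a$.
\end{itemize}
In either case $\Balg_a=e\Talg(a)e$.

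\emph{Strongly regular graphs.} A connected nontrivial strongly regular graph is distance-regular of diameter one (complete) or two. The last assertion therefore follows directly from the previous step.
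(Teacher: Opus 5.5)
Your proposal is correct and follows essentially the paper's proof. Both arguments rest on the same observation: if $G-a$ were disconnected, a vertex in one residual component and a neighbor of $a$ in another would be at distance $\partial(a,z)+1$, which exceeds the diameter. Both then apply \cref{lem:corner} with $E_1^*|_e=E$ and $E_2^*|_e=I-E$. The paper's connectivity step is slightly shorter: it first picks a vertex at distance $D$ from $a$, then a neighbor of $a$ in a different component, so your extremal $b_i\ge1$ step is not needed.
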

\begin{proof}
Let $D$ be the diameter.  If deletion of $a$ disconnects the graph, choose
a vertex $v$ at distance $D$ from $a$, which exists because the graph is
distance-regular, and a neighbor $u$ of $a$ in a different residual
component.  Every $u$--$v$ path passes through $a$, giving distance $D+1$,
a contradiction.  The one-vertex deletion of $K_2$ is connected as well.
For $D=1$ the only residual distance projection is $e$; for $D=2$ they are
$E$ and $e-E$.  Apply \cref{lem:corner}.
\end{proof}

The corner equality in diameter two thus has an elementary explanation:
root passages are already available from the residual Laplacian, and
there is only one additional distance class.  It is not a consequence
of a classification of all irreducible strongly regular Terwilliger
modules.  In higher diameter the missing distance projections can create
a genuine loss, as the next theorem determines exactly for hypercubes.

\subsection{The Boolean lattice decomposition}
We recall the decomposition needed for the hypercube and give its proof
to specify the residual action.  The underlying full Terwilliger
structure is classical; see Go~\cite{Go2002}.
Identify $V(Q_d)$ with the subsets of $[d]$, with root $\varnothing$.
Let $V_k$ be the span of the $k$-subsets.  Define raising and lowering
operators by
\[
 R e_S=\sum_{i\notin S}e_{S\cup\{i\}},\qquad L=R^{\mathsf T}.
\]
Thus $A=L+R$.  Direct counting gives
\begin{equation}\label{eq:boolean-commutator}
 (LR-RL)|_{V_k}=(d-2k)I.
\end{equation}
Indeed, the off-diagonal terms on either side correspond to replacing
one element of $S$ by one outside $S$ and cancel, while the diagonal
terms count $d-k$ and $k$ choices.

\begin{lemma}[Harmonic chains]\label{lem:boolean}
For $0\le j\le\lfloor d/2\rfloor$, let $H_j=\ker L\cap V_j$ and put
$h_j=\binom dj-\binom d{j-1}$, with $\binom d{-1}=0$.
Then $\dim H_j=h_j$.  An orthogonal basis of each $H_j$ generates
mutually orthogonal chains
\[
 W(h)=\Span\{h,Rh,\ldots,R^{d-2j}h\},\qquad h\in H_j,
\]
whose direct sum is the whole vertex space.  In an orthonormal chain
basis indexed by $0\le i\le d-2j$, adjacency is tridiagonal with zero
diagonal and consecutive off-diagonal entries
\begin{equation}\label{eq:boolean-jacobi}
 \sqrt{(i+1)(d-2j-i)}.
\end{equation}
The distance projection $E_k^*$ restricts to the coordinate projection
at $i=k-j$ when that index is in the chain, and to zero otherwise.
The adjacency spectrum of this chain is
\begin{equation}\label{eq:chain-spectrum}
 d-2j,\ d-2j-2,\ldots,\ 2j-d.
\end{equation}
\end{lemma}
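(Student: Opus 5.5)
The plan is to run the classical $\mathfrak{sl}_2$-style argument, using only \eqref{eq:boolean-commutator} and the grading $R V_k\subseteq V_{k+1}$. First I compute $\dim H_j$. The key claim is that for $k<d/2$ the map $R\colon V_k\to V_{k+1}$ is injective. For $x\in V_k$, \eqref{eq:boolean-commutator} gives
\[
 \norm{Rx}^2=\ip{x}{LRx}=\ip{x}{RLx}+(d-2k)\norm x^2=\norm{Lx}^2+(d-2k)\norm x^2 .
\]
This is positive for $x\ne0$. Dually, $L\colon V_{k+1}\to V_k$ is surjective. Hence for $j\le d/2$ we get $\dim H_j=\dim V_j-\dim V_{j-1}=h_j$, and the orthogonal decomposition $V_j=H_j\oplus R V_{j-1}$ holds because $(\ker L\cap V_j)^\perp=\operatorname{im}(R|_{V_{j-1}})$.

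Next I work out the chain norms and the action of $L$. For $h\in H_j$, induction with \eqref{eq:boolean-commutator} gives
\[
 LR^{i+1}h=\mu_i R^ih,\qquad \mu_i=\sum_{m=0}^{i}(d-2j-2m)=(i+1)(d-2j-i).
\]
This yields $\norm{R^{i+1}h}^2=\mu_i\norm{R^ih}^2$. So $R^ih\ne0$ exactly for $0\le i\le d-2j$, and $R^{d-2j+1}h=0$ (since $\mu_{d-2j}=0$). Normalizing, $R$ sends the $i$th unit vector to $\sqrt{\mu_i}$ times the $(i+1)$th, and $L=R^{\mathsf T}$ acts as the transpose. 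Thus $A=L+R$ has zero diagonal and off-diagonal entries \eqref{eq:boolean-jacobi}. Since $R^ih\in V_{j+i}$, the statement about $E_k^*$ is immediate.

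For orthogonality, vectors in different degrees are orthogonal. In the same degree, I use $\ip{R^ih}{R^ih'}=\ip{h}{L^iR^ih'}$, which is a scalar multiple of $\ip{h}{h'}$. So chains built from orthogonal basis vectors, within one $H_j$ or from different $j$ (via the degree shift), are mutually orthogonal. In the cross case $j\ne j'$ with equal degree, $j+i=j'+i'$ and $i>i'$, we have $\ip{R^ih}{R^{i'}h'}=\ip{L^{i'}R^ih}{h'}$, a multiple of $R^{i-i'}h$, which lies in $RV_{j'-1}\perp H_{j'}$.

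The main obstacle is showing that the direct sum is everything. I plan to count dimensions degree by degree. For $k\le d/2$, degree $k$ receives $\sum_{j\le k}h_j=\binom dk$ vectors by telescoping. For $k>d/2$, it receives $\sum_{j\le d-k}h_j=\binom d{d-k}=\binom dk$. Orthogonality plus nonvanishing then gives spanning. Finally, the chain matrix is the $(d-2j+1)$-dimensional spin representation, so its spectrum is \eqref{eq:chain-spectrum}. To see this, note that the path on the Jacobi matrix with entries $\sqrt{(i+1)(n-i)}$ is $2J_x$ for spin $n/2$. Alternatively, $Q_n$ has the single chain $j=0$ on radial vectors, whose local spectrum is $n,n-2,\dots,-n$; I would cite the latter to avoid representation theory.
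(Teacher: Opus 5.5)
Your proposal is correct and follows essentially the paper's proof. The shared steps are: the commutator norm identity giving injectivity of $R$ below the middle level, the lowering formula $LR^{i+1}h=(i+1)(d-2j-i)R^ih$ for the chain norms, orthogonality of chains with different $j$ via $R^{i-i'}h\in RV_{j'-1}\perp H_{j'}$, and the level-by-level telescoping count. For the spectrum, your second option is exactly the paper's identification of the chain matrix with $\sum_r X_r$ on symmetric tensors of $(\R^2)^{\otimes(d-2j)}$, that is, with adjacency on the radial module of $Q_{d-2j}$; the only difference is that the paper computes the eigenvalues directly in the eigenbasis of $X$ instead of citing them.
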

\begin{proof}
For $v\in V_{j-1}$, \eqref{eq:boolean-commutator} gives
\[
 \norm{Rv}^2=\norm{Lv}^2+(d-2j+2)\norm v^2.
\]
For $1\le j\le\lfloor d/2\rfloor$, this makes
$R:V_{j-1}\to V_j$ injective and its transpose $L:V_j\to V_{j-1}$
surjective.  Hence $\dim H_j=h_j$, including the evident case $j=0$.

For $h\in H_j$, induction using \eqref{eq:boolean-commutator} proves
\begin{equation}\label{eq:chain-lowering}
 LR^ih=i(d-2j-i+1)R^{i-1}h.
\end{equation}
Taking inner products gives
\[
 \norm{R^ih}^2=i(d-2j-i+1)\norm{R^{i-1}h}^2.
\]
Thus the chain vectors are nonzero through $i=d-2j$ and
$R^{d-2j+1}h=0$.  Normalization gives \eqref{eq:boolean-jacobi}.
The assertion for distance projections follows from
$R^ih\in V_{j+i}$.

Vectors in different levels are orthogonal.  At the same level $k$, if
$h\in H_j$ and $h'\in H_{j'}$ with $j<j'$, repeated lowering shows that
\[
 \ip{R^{k-j}h}{R^{k-j'}h'}
 \quad\text{is a scalar multiple of}\quad
 \ip{R^{j'-j}h}{h'}=\ip{h}{L^{j'-j}h'}=0.
\]
For $j=j'$, the same operation gives a positive multiple of
$\ip{h}{h'}$.  Hence the chains are mutually orthogonal.  At level $k$
their total dimension is
\[
 \sum_{j=0}^{\min\{k,d-k\}}h_j
 =\binom d{\min\{k,d-k\}}=\binom dk,
\]
so they fill the vertex space.

Finally put $s=d-2j$.  The matrix in \eqref{eq:boolean-jacobi} is the
restriction of $\sum_{r=1}^s X_r$ to the symmetric tensors in
$(\R^2)^{\otimes s}$, where
$X=\left(\begin{smallmatrix}0&1\\1&0\end{smallmatrix}\right)$ acts in
coordinate $r$.  In the normalized symmetric basis with $i$ entries of
the second standard basis vector, its consecutive coefficients are
$\sqrt{(i+1)(s-i)}$.  Taking instead the eigenbasis of $X$ gives one
symmetric eigenvector with eigenvalue $s-2i$ for each $0\le i\le s$.
This proves \eqref{eq:chain-spectrum}, including $s=0$.
\end{proof}

\begin{theorem}[The full punctured hypercube algebra]\label{thm:cube}
For $d\ge2$ and any root of $Q_d$,
\begin{align}
 \Balg_a(Q_d)&\cong
 M_d(\R)\oplus M_{d-1}(\R)\oplus\R^{\max\{d-3,0\}},\label{eq:cube-algebra}\\
 e\Talg(a)e&\cong
 M_d(\R)\oplus
 \bigoplus_{j=1}^{\lfloor d/2\rfloor}M_{d-2j+1}(\R).
 \label{eq:cube-corner}
\end{align}
For $d=1$ both algebras are $\R$.  Therefore the two algebras are equal
as subalgebras of the residual matrix algebra precisely for $1\le d\le4$.
For $d\ge2$,
\[
 \dim\Balg_a=d^2+(d-1)^2+\max\{d-3,0\}.
\]
\end{theorem}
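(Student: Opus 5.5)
We would prove both isomorphisms by restricting the orthogonal chain decomposition of \cref{lem:boolean} to the residual space, and then applying Wedderburn theory to the transpose-closed algebras $\Balg_a$ and $e\Talg(a)e$. We take the root to be $\varnothing$, which is no loss by vertex-transitivity. The root $e_\varnothing$ is the bottom vector of the unique $j=0$ chain $W_0$, and every other chain lies in $e_\varnothing^\perp$. So the residual space is the orthogonal direct sum of $W_0^-:=W_0\cap e_\varnothing^\perp$, of dimension $d$, and of the chains $W(h)$ with $h\in H_j$, $j\ge1$.

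\textbf{Step 1: the actions of the generators.} Each summand is invariant under $B$, $E$ and every $E_k^*|_e$. On $W(h)$ with $j\ge1$, $B$ coincides with $A$, because these chains avoid the root. Hence $B$ is the Jacobi matrix \eqref{eq:boolean-jacobi}. By the distance-projection clause of \cref{lem:boolean}, $E=E_1^*|_e$ is the projection onto the coordinate $i=0$ when $j=1$, and is zero when $j\ge2$. On $W_0^-$, $B$ is the Jacobi matrix with the root row and column deleted, which still has positive off-diagonal entries, and $E$ projects onto the first coordinate. This case is also covered by \cref{prop:dr-module}, since $Q_d$ is distance-regular.

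\textbf{Step 2: the irreducible constituents of $\Balg_a$.} On $W_0^-$ and on each $j=1$ chain, $B$ is tridiagonal with positive off-diagonal entries and $E$ is the first-coordinate projection. The argument of \cref{prop:dr-module}, using the operators $B^rEB^s$, shows that $\Balg_a$ acts as the full matrix algebra on each of them: $M_d(\R)$ on $W_0^-$ and $M_{d-1}(\R)$ on each $j=1$ chain. All $j=1$ chains carry the same matrices in their chain bases, so they are pairwise equivalent. On $j\ge2$ chains we have $E=0$, so these chains split into one-dimensional $B$-eigenspaces. By \eqref{eq:chain-spectrum} the eigenvalues occurring are exactly $d-4,d-6,\ldots,4-d$, which are $\max\{d-3,0\}$ values. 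One-dimensional constituents with different eigenvalues are inequivalent, and those with equal eigenvalues are equivalent, since both generators act by the same scalars.

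\textbf{Step 3: inequivalence and conclusion.} We must check that the $d$-dimensional, the $(d-1)$-dimensional and the one-dimensional constituents are pairwise inequivalent. For $d\ge3$ this follows from the dimensions. For $d=2$ the $j=1$ constituent is one-dimensional, but $E$ acts on it by $1$ rather than by $0$. All constituents are absolutely irreducible, since each image is a full matrix algebra over $\R$. Therefore $\Balg_a$, which is semisimple because it is closed under transpose, is isomorphic to the product of the endomorphism algebras of its inequivalent constituents, by density together with Wedderburn's theorem. This gives \eqref{eq:cube-algebra} and the stated dimension.

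For \eqref{eq:cube-corner}, \cref{lem:corner} expresses $e\Talg(a)e$ through $B$, the residual distance projections and $bb^{\mathsf T}$. All of these preserve the same decomposition, because $bb^{\mathsf T}$ lives on $W_0^-$. On each chain the distance projections are the coordinate projections, and together with the Jacobi matrix they generate the full matrix algebra. Chains with different $j$ are inequivalent, because their $E_k^*$ supports differ. Hence the constituents are $M_d(\R)$ and $M_{d-2j+1}(\R)$ for $1\le j\le\lfloor d/2\rfloor$.

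\textbf{Step 4: the equality criterion.} Since $\Balg_a\subseteq e\Talg(a)e$, the two algebras are equal exactly when their dimensions agree. For $d=2,3,4$ the decompositions coincide term by term, and for $d=1$ both algebras are $\R$. For $d\ge5$ the $j=2$ block contributes $(d-3)^2$ to the corner, which exceeds the $d-3$ scalar dimensions in $\Balg_a$, so equality fails.

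The main obstacle is Step 3: the identification of $\Balg_a$ with the full product relies entirely on the inequivalence of the constituents, and in particular on the degenerate case $d=2$, where it must be checked by hand.
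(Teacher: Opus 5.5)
Your proposal is correct and follows essentially the paper's proof. You restrict the Boolean chain decomposition to the residual space, obtain full matrix images on the primary and $j=1$ chains via the rank-one operators $B^rEB^s$ and a commutative image with $d-3$ eigenvalues on the $j\ge2$ chains, and assemble the components by semisimplicity and pairwise inequivalence. The differences are harmless variations: you use Wedderburn/density where the paper uses the Chinese remainder theorem, you reach the corner through \cref{lem:corner} rather than by compressing the full Terwilliger decomposition, and you prove strict inclusion for $d\ge5$ by counting dimensions rather than by commutativity. The only quibble is that your ``degenerate case $d=2$'' is vacuous, since for $d=2$ there are no $j\ge2$ chains and the two constituents already differ in dimension.
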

\begin{proof}
By vertex-transitivity we may take root $\varnothing$.  The chain with
$j=0$ is the primary chain and is the only chain meeting the root level.
After deletion it has dimension $d$.  Its residual adjacency matrix is
irreducible tridiagonal, and $E=E_1^*$ is the rank-one projection at its
first remaining level.  As in \cref{prop:dr-module}, these two operators
generate $M_d(\R)$.

Each chain with $j=1$ survives intact, has dimension $d-1$, and has $E$
as the rank-one projection onto its first level.  It therefore gives a
full $M_{d-1}(\R)$ image.  The $h_1=d-1$ copies have identical generator
matrices and contribute one simple algebra component, not $h_1$ different
components.

On every chain with $j\ge2$, the generator $E$ vanishes.  The simultaneous
image on the sum of all these chains is therefore the polynomial algebra
of their direct-sum adjacency operator.  By \eqref{eq:chain-spectrum},
these spectra are nested and their union, when $d\ge4$, is
\[
 \{d-4,d-6,\ldots,4-d\},
\]
of cardinality $d-3$.  Spectral interpolation identifies this image with
$\R^{d-3}$.  There are no such chains when $d=2,3$.

We justify that these images are independent algebra components, rather
than merely separately surjective restrictions.  A finite-dimensional
transpose-closed real matrix algebra is semisimple.  One elementary
reason is that its Jacobson radical is transpose-stable and nilpotent;
if $x$ lies in the radical, then $x^{\mathsf T}x$ is both positive
semidefinite and nilpotent, forcing $x=0$.  The irreducible constituents
just exhibited have full real matrix images.  The two visible
constituents have different dimensions, and the remaining ones are
one-dimensional with $E=0$ and distinct adjacency eigenvalues.  They are
therefore pairwise inequivalent.  Their kernels are distinct maximal
two-sided ideals, so the Chinese remainder theorem makes the map onto
the product of these images surjective.  Its kernel is zero because the
chain decomposition is exhaustive.  This proves \eqref{eq:cube-algebra}.

For the full Terwilliger algebra, each chain in \cref{lem:boolean} has
all its level projections available and gives its full matrix algebra.
Different $j$ have different first nonzero levels, so their representations
are inequivalent.  The same semisimplicity argument gives the classical
direct-sum decomposition, with one component for each $j$; the harmonic
multiplicities do not alter the algebra dimension.  Compressing by $e$
removes the first coordinate only from the $j=0$ component, proving
\eqref{eq:cube-corner}.

There is always an inclusion $\Balg_a\subseteq e\Talg(a)e$.  For $d=2,3$
there are no hidden chains, and for $d=4$ there is just a single hidden
one-dimensional chain.  Hence the displayed dimensions agree for
$1\le d\le4$, proving equality.  For $d\ge5$, the $j=2$ chain has
dimension $d-3\ge2$.  The corner has its full noncommutative matrix image
on that chain, whereas $\Balg_a$ has only a commutative polynomial image.
The inclusion is therefore strict.
\end{proof}

\begin{remark}
The mechanism is an endpoint obstruction.  The first-sphere projection
sees only the chains starting in levels zero and one.  It vanishes on
all later chains, which are then distinguished only by adjacency
eigenvalues.  In particular $\dim P_a(Q_d)=d$ for every $d$, but this does
not imply that $\Balg_a$ has only one simple component or that it equals
the compressed corner.
\end{remark}

\subsection{All Hamming graphs}
The same endpoint analysis extends to $H(d,q)=K_q^{\square d}$.
For $q\ge3$ an additional local constituent is present, and the threshold
for corner equality changes.  The full Terwilliger decomposition used
below is the one studied by Levstein, Maldonado, and
Penazzi~\cite{LevsteinMaldonadoPenazzi2006}; we derive the needed chain
matrices directly from the one-coordinate decomposition.

\begin{theorem}[The full punctured Hamming algebra]\label{thm:hamming}
Let $q\ge3$.  For $d\ge2$,
\begin{equation}\label{eq:hamming-algebra}
 \Balg_a(H(d,q))\cong
 M_d(\R)\oplus M_d(\R)\oplus M_{d-1}(\R)\oplus\R^{d-1}.
\end{equation}
For $d=1$ it is $\R\oplus\R$.  In particular,
\begin{equation}\label{eq:hamming-dimension}
 \dim\Balg_a(H(d,q))=3d^2-d\qquad(d\ge1).
\end{equation}
For all $d\ge1$, the compressed Terwilliger algebra is
\begin{equation}\label{eq:hamming-corner}
 e\Talg(a)e\cong M_d(\R)\oplus
 \bigoplus_{\substack{0\le r\le d,\ 0\le j\le\lfloor(d-r)/2\rfloor\\
                     (r,j)\ne(0,0)}}
 M_{d-r-2j+1}(\R).
\end{equation}
Consequently, among all Hamming graphs with $d\ge1,q\ge2$,
\begin{equation}\label{eq:hamming-equality}
 \Balg_a(H(d,q))=e\Talg(a)e
 \quad\Longleftrightarrow\quad
 \begin{cases}
 d\le4,&q=2,\\
 d\le2,&q\ge3.
 \end{cases}
\end{equation}
\end{theorem}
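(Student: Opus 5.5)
The plan is to run the endpoint analysis of \cref{thm:cube}, replacing the Boolean lattice by a one-coordinate decomposition of $\R^{q}$. Take the root $0\in[q]$ in each coordinate, which is allowed by vertex-transitivity. Then
\[
 \R^q=\R e_0\oplus\R f\oplus W,
\]
where $f$ is the normalized indicator of the $q-1$ non-root vertices and $W$ is the $(q-2)$-dimensional space of functions on the non-root vertices with zero sum. On $W$, adjacency of $K_q$ acts as $-1$, $E_0^*=0$ and $E_1^*=I$. On $\R e_0\oplus\R f$, adjacency is $\left(\begin{smallmatrix}0&\sqrt{q-1}\\\sqrt{q-1}&q-2\end{smallmatrix}\right)$, with $e_0$ at level $0$ and $f$ at level $1$. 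Tensoring over the $d$ coordinates splits the vertex space according to the set of $r$ coordinates carrying a $W$-factor. The $W$-factors contribute the scalar $-r$ to adjacency and shift every level by $r$. The remaining $d-r$ coordinates carry $(\R e_0\oplus\R f)^{\otimes(d-r)}$, on which the level is the number of $f$'s. Here \cref{lem:boolean} applies with $R,L$ built from $e_0\mapsto f$. The off-diagonal part of adjacency is $\sqrt{q-1}(L+R)$, and the diagonal part is $(q-2)$ times the level. Harmonic vectors then give chains indexed by $(r,j)$, with $0\le j\le\lfloor(d-r)/2\rfloor$, occupying levels $r+j,\ldots,d-r-j+r$. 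Each such chain has dimension $d-r-2j+1$. On it, adjacency is irreducible tridiagonal and every $E_k^*$ is a coordinate projection. Orthogonality and exhaustiveness follow exactly as in \cref{lem:boolean}, tensored with orthonormal bases of the $W^{\otimes r}$ factors.

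\textbf{Corner.} Every chain carries all its level projections, so $e\Talg(a)e$ acts on it as a full matrix algebra. The pair (first level, dimension) $=(r+j,\,d-r-2j+1)$ determines $(r,j)$, so distinct pairs give inequivalent constituents. Transpose-closedness gives semisimplicity, and the Chinese remainder argument from \cref{thm:cube} then yields \eqref{eq:hamming-corner}. Deleting the root shortens only the $(0,0)$ chain, to dimension $d$.

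\textbf{Punctured algebra.} $E=E_1^*$ is nonzero only on chains meeting level $1$, namely $(0,0)$, $(1,0)$ and $(0,1)$.
\begin{itemize}
\item The residual $(0,0)$ chain has dimension $d$. By the argument of \cref{prop:dr-module}, an irreducible tridiagonal $B$ together with the rank-one endpoint projection $E$ generates $M_d(\R)$.
\item The $(1,0)$ chains also have dimension $d$ and start at level $1$, so they too give $M_d(\R)$. All copies share the same generator matrices.
\item The $(0,1)$ chains have dimension $d-1$ and give $M_{d-1}(\R)$.
\end{itemize}
The key inequivalence check is between the two $d$-dimensional constituents. The scalar value of $EBE$ on the image of $E$ is $a_1=q-2$ on the primary chain and $-1$ on a $(1,0)$ chain, and these differ because $q\ge3$. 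On all chains with $r+j\ge2$ we have $E=0$, so the image there is $\R[B]$ restricted to their sum. A chain $(r,j)$ with $s=d-r-2j$ has spectrum $-r+\{(q-1)(s-i)-i+(q-2)j\ \ldots\}$. Rather than tracking this by hand, I would identify it as $\{(q-1)d-qi: r+j\le i\le d-j\}$ by comparing with the Hamming eigenvalues. The union over $r+j\ge2$ is $\{(q-1)d-qi:2\le i\le d\}$, which has $d-1$ elements. These one-dimensional constituents have $E=0$ and are distinct from each other and from the three large ones. Semisimplicity and the Chinese remainder theorem then give \eqref{eq:hamming-algebra}. The dimension is $2d^2+(d-1)^2+d-1=3d^2-d$, and for $d=1$ the same analysis gives $\R\oplus\R$.

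\textbf{Equality criterion.} For $q\ge3$, equality holds exactly when every hidden chain is one-dimensional, since on a hidden chain of dimension at least $2$ the corner's image is noncommutative while $\Balg_a$ only has a commutative image. The $(2,0)$ chain has dimension $d-1$, so equality fails for $d\ge3$. For $d=2$ the constituents $M_2\oplus M_2\oplus M_1\oplus M_1$ match on both sides, and $d=1$ is immediate. The case $q=2$ is \cref{thm:cube}.

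\textbf{Main obstacle.} I expect the main obstacle to be bookkeeping the chain spectra, in particular confirming that the hidden eigenvalue set has exactly $d-1$ points. I would settle this by an explicit diagonalization of the two-dimensional factor, with eigenvalues $q-1$ and $-1$, inside the symmetric-tensor model used in \cref{lem:boolean}.
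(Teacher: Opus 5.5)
Your proposal is correct and follows essentially the paper's route: the same splitting of $\R^q$ into $\R e_0\oplus\R f$ and its $(q-2)$-dimensional complement, Boolean harmonic chains indexed by $(r,j)$, the $EBE$ scalars $q-2$ versus $-1$ separating the two $d$-dimensional visible blocks, and the hidden spectrum $\{(q-1)d-qk:2\le k\le d\}$ obtained from the eigenvalues $q-1,-1$ of the $2\times2$ block acting on symmetric tensors. Two small fixes: apply the (first level, dimension) inequivalence to $\Talg(a)$ before compressing, because after the root is deleted the $(0,0)$ and $(1,0)$ constituents both start at level $1$ with dimension $d$; and state that every pair $(r,j)$ actually occurs, which holds because $q-2\ge1$ and $h_j>0$.
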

\begin{proof}
Fix root symbol $0$ in each coordinate.  In a single copy of $\R^q$, let
\[
 u_0=e_0,\qquad
 u_1=(q-1)^{-1/2}\sum_{x\ne0}e_x,\qquad
 W=\Span\{u_0,u_1\},\qquad Z=W^\perp.
\]
The adjacency of $K_q$ and the projection onto non-root symbols preserve
$W\oplus Z$.  On $W$ their matrices are respectively
\[
 C_q=\begin{pmatrix}0&\sqrt{q-1}\\\sqrt{q-1}&q-2\end{pmatrix},
 \qquad\begin{pmatrix}0&0\\0&1\end{pmatrix},
\]
and on $Z$ they act as $-I$ and $I$.  Here $\dim Z=q-2>0$.

Choose the $r$ coordinates that lie in $Z$, and choose basis vectors
in their $Z$ spaces.  The remaining $s=d-r$ coordinates form
$W^{\otimes s}$, naturally identified with the Boolean tensor space of
\cref{lem:boolean}.  On this space the full Hamming adjacency is
\begin{equation}\label{eq:hamming-tensor-action}
 \sqrt{q-1}(R+L)+(q-2)K-rI,
\end{equation}
where $K$ is multiplication by the Boolean level.  The total distance
from the root is $r+K$.  Apply \cref{lem:boolean}, with $s$ in place of
$d$, to decompose this space into chains indexed by
$0\le j\le\lfloor s/2\rfloor$.  A chain has basis positions
$0\le i\le\ell$, where $\ell=d-r-2j$, and its matrices are
\begin{align}
 A_{ii}&=(q-2)(j+i)-r,\label{eq:hamming-chain-diagonal}\\
 A_{i,i+1}&=\sqrt{(q-1)(i+1)(\ell-i)},\label{eq:hamming-chain-offdiag}\\
 E_k^*|_{\mathrm{chain}}&=
 \text{the projection at }i=k-r-j\text{ when }0\le i\le\ell,
 \label{eq:hamming-chain-level}
\end{align}
with zero otherwise.  Every pair $(r,j)$ in the stated range occurs:
its multiplicity in the standard module is
\[
 \binom dr(q-2)^r
 \left(\binom{d-r}{j}-\binom{d-r}{j-1}\right)>0.
\]
The construction is an orthogonal decomposition of the entire tensor
space, so no other constituents are missing.

The adjacency matrix on a chain is a scalar shift of the action of
$\sum_{k=1}^{\ell}(C_q)_k$ on symmetric tensors of degree $\ell$.
Since the eigenvalues of $C_q$ are $q-1$ and $-1$, the chain spectrum is
\begin{equation}\label{eq:hamming-chain-spectrum}
 \{d(q-1)-q(r+j+i):0\le i\le d-r-2j\}.
\end{equation}
Indeed the scalar shift is $(q-2)j-r$, and adding it to
$\ell(q-1)-qi$ gives the displayed formula.

All distance projections are available to the full Terwilliger algebra,
so each chain gives a full matrix image.  The first and last nonzero
levels of a chain are $r+j$ and $d-j$.  These determine $j$ and $r$, so
distinct pairs have inequivalent Terwilliger representations.  The
semisimplicity and independent-component argument in \cref{thm:cube}
therefore gives one full matrix component for each pair.  Only $(0,0)$
meets the root, and its compression has size $d$, proving
\eqref{eq:hamming-corner}.

We now retain only residual adjacency and $E=E_1^*$.  The primary chain
$(0,0)$, with its root removed, gives $M_d(\R)$.  The only other chains
on which $E$ is nonzero are $(1,0)$, of dimension $d$, and $(0,1)$, of
dimension $d-1$; the latter exists only for $d\ge2$.  On each, $E$ is the
first-coordinate projection, so the positive off-diagonal entries in
\eqref{eq:hamming-chain-offdiag} give the full matrix image.
The two size-$d$ representations are inequivalent: on the primary
punctured chain $EBE=(q-2)E$, whereas on $(1,0)$ it is $EBE=-E$.
The $(0,1)$ representation has different dimension, and hence is
inequivalent to both.  This distinction between the two size-$d$ blocks
is necessary; separate surjectivity on equal-sized blocks alone would
not prove their independence.

On all remaining chains $r+j\ge2$, so $E=0$ and the image is only the
polynomial algebra of their simultaneous adjacency operator.  For
$d\ge2$, their union of spectra is exactly
\[
 \{d(q-1)-qk:2\le k\le d\}.
\]
Containment follows from \eqref{eq:hamming-chain-spectrum}; the reverse
containment already follows from the single chain $(r,j)=(2,0)$.
There are $d-1$ distinct values, giving $\R^{d-1}$ by spectral
interpolation.  These scalar constituents have $E=0$ and distinct
adjacency eigenvalues, and so are inequivalent to one another and to
the visible constituents.  The same independent-component argument
proves \eqref{eq:hamming-algebra}.  At $d=1$, only the first two
one-dimensional visible constituents are present, giving $\R^2$.
The dimension formula follows.

For $d=1,2$, the only hidden chains are absent or one-dimensional;
\eqref{eq:hamming-algebra} and \eqref{eq:hamming-corner} therefore have
the same dimension, and the algebras are equal by inclusion.  For
$d\ge3$, the hidden chain $(2,0)$ has dimension $d-1\ge2$.  The corner
acts on it as a full matrix algebra while the punctured algebra acts
commutatively, proving strict inclusion.  Combine this with
\cref{thm:cube} for $q=2$.
\end{proof}

\section{Stabilizer orbits and distance-graded defects}\label{sec:orbits}

The mixed-product theorem has no primeness assumption.  Orbit counting is
different: extra automorphisms can interchange Cartesian prime coordinates
hidden inside a nonprime factor.  In this section, whenever an exact
product orbit formula is used, the factors are explicitly required to be
Cartesian-prime.

\subsection{The Cartesian orbit formula}
For a rooted connected graph $(F,r)$, write $\mathcal O(F,r)$ for the set
of $\Aut(F)_r$-orbits on all vertices, including the root orbit, and put
$t(F,r)=|\mathcal O(F,r)|$.  Every such orbit lies in one distance sphere;
write $\rho(O)$ for that distance.

\begin{proposition}[Product orbit types]\label{prop:orbit-product}
Let $F_1,\ldots,F_s$ be pairwise nonisomorphic connected nontrivial
vertex-transitive Cartesian-prime graphs.  Let $m_i\ge1$ and
$G=\mathop{\square}_{i=1}^s F_i^{\square m_i}$.  Choose roots $r_i$ and
let $a$ be the tuple with root $r_i$ in every copy of $F_i$.
Set $t_i=t(F_i,r_i)$.  Then
\begin{equation}\label{eq:orbit-dimension}
 \dim U_a^-+1=\prod_{i=1}^s\binom{t_i+m_i-1}{m_i}.
\end{equation}
More precisely, the distance-graded orbit profile
\[
 \Omega_{G,a}(z)=\sum_{k\ge0}
 |\Aut(G)_a\backslash S_k(a)|\,z^k
\]
satisfies
\begin{equation}\label{eq:orbit-profile}
 \Omega_{G,a}(z)=
 \prod_{i=1}^s
 \left(
 \sum_{\substack{\alpha\in\mathbb Z_{\ge0}^{\mathcal O(F_i,r_i)}\\
                  \sum_O\alpha_O=m_i}}
 z^{\sum_O\alpha_O\rho(O)}
 \right).
\end{equation}
The dimension and profile do not depend on the root of $G$.
\end{proposition}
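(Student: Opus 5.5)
The plan is to describe $\Aut(G)_a$ explicitly and then count its orbits by a multiset argument. By the Cartesian automorphism theorem~\cite{HIK2011}, applied to the prime factorization $G=\mathop{\square}_i F_i^{\square m_i}$ with pairwise nonisomorphic prime factors, every automorphism of $G$ has the form
\[
(x_{i,k})_{i,k}\longmapsto\bigl(\phi_{i,k}(x_{i,\pi_i^{-1}(k)})\bigr)_{i,k},
\]
where $\pi_i\in S_{m_i}$ permutes the copies of $F_i$ among themselves and $\phi_{i,k}\in\Aut(F_i)$. Copies of distinct $F_i$ cannot be interchanged, because the factors are pairwise nonisomorphic. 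Such a map fixes $a=(r_i,\ldots,r_i)_i$ exactly when each $\phi_{i,k}$ fixes $r_i$. Hence
\[
\Aut(G)_a=\prod_{i=1}^s\bigl(\Aut(F_i)_{r_i}\wr S_{m_i}\bigr),
\]
acting coordinatewise. The main point needing care is this wreath product description. In particular, one must check that the stabilizer contains no extra elements coming from the transitive part; the explicit form above makes this immediate.

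Next I would compute the orbits of a single wreath product $\Aut(F_i)_{r_i}\wr S_{m_i}$ on $V(F_i)^{m_i}$. The base group replaces each coordinate by its $\Aut(F_i)_{r_i}$-orbit, and $S_{m_i}$ forgets the order of the coordinates. So an orbit is determined by the multiset of orbit types, that is, by a vector $\alpha\in\mathbb Z_{\ge0}^{\mathcal O(F_i,r_i)}$ with $\sum_O\alpha_O=m_i$. Conversely, two tuples with the same $\alpha$ are related by first permuting coordinates to align the types and then applying stabilizer elements coordinatewise. The number of such $\alpha$ is $\binom{t_i+m_i-1}{m_i}$. Orbits of the direct product on the product set are products of orbits, so
\[
\dim U_a^-+1=\prod_{i=1}^s\binom{t_i+m_i-1}{m_i}.
\]
Here the invariant space of a permutation action on all vertices has dimension equal to the number of orbits, and $U_a^-$ omits only the root orbit $\{a\}$.

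For the profile, distance in a Cartesian product is the sum of the coordinate distances. Each orbit lies in one sphere, so the orbit indexed by $(\alpha^{(i)})_i$ sits in $S_k(a)$ with
\[
k=\sum_i\sum_O\alpha^{(i)}_O\rho(O).
\]
Summing $z^k$ over all orbits factors as the stated product, which gives the orbit profile formula. For root independence, $G$ is vertex-transitive, being a product of vertex-transitive graphs, so any two roots are related by an automorphism that conjugates the stabilizers. Alternatively, $t_i$ and the per-factor orbit profile do not depend on $r_i$, by the vertex-transitivity of $F_i$.
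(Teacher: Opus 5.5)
Your proposal is correct and takes essentially the same route as the paper. The Cartesian automorphism theorem identifies $\Aut(G)_a$ with $\prod_i \Aut(F_i)_{r_i}\wr S_{m_i}$. Orbits are then classified by multisets of rooted orbit labels in each block, distance additivity gives the profile, and vertex-transitivity gives root independence.

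Your conjugation argument in the vertex-transitive graph $G$ is equivalent to the paper's coordinatewise transport of roots. It is also the argument that handles roots of $G$ with different coordinates in different copies of $F_i$. Your ``alternatively'' remark, that $t_i$ does not depend on $r_i$, does not cover that case on its own.
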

\begin{proof}
The automorphism theorem for connected Cartesian products
\cite{HIK2011} says that automorphisms act by coordinate automorphisms
and by permutations of isomorphic Cartesian prime factors.  For the
chosen root, the stabilizer is therefore the product over $i$ of
$\Aut(F_i)_{r_i}\wr\operatorname{Sym}(m_i)$ in its product action.
A stabilizer orbit is specified by a multiset of $m_i$ rooted factor-orbit
labels in each block.  Conversely, equal multisets can be aligned by a
coordinate permutation and then matched by root-fixing factor
automorphisms.  This proves the orbit classification.  The number of
multisets in block $i$ is $\binom{t_i+m_i-1}{m_i}$.  Subtracting the unique
root orbit gives \eqref{eq:orbit-dimension}.

Cartesian distance is the sum of coordinate distances.  A multiset with
multiplicities $\alpha_O$ contributes distance
$\sum_O\alpha_O\rho(O)$, so multiplication over blocks gives
\eqref{eq:orbit-profile}.  Vertex-transitivity permits each arbitrary
root coordinate to be carried to the chosen representative $r_i$, proving
root-independence.  This choice also avoids assuming a canonical
identification between rooted orbit labels at different vertices.
\end{proof}

\subsection{Metric loss and orbit splitting}
Suppose in addition that the factors in \cref{prop:orbit-product} are
distance-regular, with diameters $D_i$.  Let $Q_a^-$ be the span of the
indicators of the non-root cells specified, in each block, by the
multiset of coordinate distances.  This is the symmetric distance
quotient, as a subspace of the residual vertex space.

\begin{proposition}[The two sources of defect]\label{prop:split}
Under these hypotheses,
\begin{equation}\label{eq:sandwich}
 P_a\subseteq Q_a^-\subseteq U_a^-,\qquad
 \dim Q_a^-+1=\prod_i\binom{D_i+m_i}{m_i}.
\end{equation}
Consequently
\begin{equation}\label{eq:defect-split}
 \defect(G,a)=
 \underbrace{\dim Q_a^--\dim P_a}_{\delta_{\mathrm{met}}\ge0}
 +\underbrace{\dim U_a^--\dim Q_a^-}_{\delta_{\mathrm{orb}}\ge0}.
\end{equation}
Here $\dim P_a$ is given exactly by \cref{thm:main} and
$\dim U_a^-$ by \cref{prop:orbit-product}.
\end{proposition}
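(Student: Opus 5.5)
The sandwich has two inclusions and a dimension count, and the defect split is then formal.

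\emph{Step 1: $Q_a^-$ is $B,E$-invariant and contains $\one$.} Write the vertices of $G$ as tuples, with coordinate distances $(\delta_x)_x$ over all factor copies $x$. For block $i$, let $\mu_i$ be the multiset of the $m_i$ coordinate distances in that block. The cells defining $Q_a^-$ are the nonempty sets of vertices sharing the tuple $(\mu_1,\ldots,\mu_s)$, with the root cell removed. They partition the residual vertices, so the residual all-ones vector $\one$ lies in $Q_a^-$.

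I will show this partition, together with the root cell, is equitable for $A(G)$. A neighbor of $v$ changes exactly one coordinate, within one factor copy. Because the distance partition of each $F_i$ at $r_i$ is equitable, the number of neighbors of $v$ changing coordinate $x$ from distance $\delta$ to $\delta'$ is an intersection number $c$, $a$ or $b$ of $F_i$, depending only on $\delta,\delta'$. Hence the number of neighbors of $v$ in a target cell is a sum over coordinates $x$ whose distance change produces the target multisets. That sum depends only on the multisets, because coordinates with equal distance in the same block contribute identically. So $A$ preserves the span of all cell indicators including the root cell. Compressing away the root gives $B$-invariance of $Q_a^-$. The first sphere is a union of cells (total distance $1$), so $E$ acts diagonally by $0/1$ on cells. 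Since $P_a$ is the smallest $B,E$-invariant subspace containing $\one$, we get $P_a\subseteq Q_a^-$.

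\emph{Step 2: $Q_a^-\subseteq U_a^-$ and the dimension.} Take the stabilizer description from the proof of \cref{prop:orbit-product}: it is $\prod_i\Aut(F_i)_{r_i}\wr\operatorname{Sym}(m_i)$. Root-fixing factor automorphisms preserve coordinate distances, and coordinate permutations within a block preserve the multiset. So each cell is a union of orbits, and cell indicators are invariant, giving $Q_a^-\subseteq U_a^-$.

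Cells are indexed by tuples of multisets of size $m_i$ from $\{0,\ldots,D_i\}$. Each such cell is nonempty, since every sphere of $F_i$ is nonempty up to $D_i$. The indicators of distinct cells are disjointly supported, hence independent. The count of multisets is $\prod_i\binom{D_i+m_i}{m_i}$, and subtracting the root cell gives \eqref{eq:sandwich}.

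\emph{Step 3: the defect split.} Equation \eqref{eq:defect-split} is the telescoping identity $\dim U_a^--\dim P_a=(\dim Q_a^--\dim P_a)+(\dim U_a^--\dim Q_a^-)$. Nonnegativity of both terms follows from the two inclusions. The references to \cref{thm:main} and \cref{prop:orbit-product} are by citation: distance-regular factors have equitable distance partitions at every root, and the orbit hypotheses are those already assumed.

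The main obstacle is the equitability argument in Step 1, specifically checking that the neighbor count into a cell is well defined when several coordinates in a block share a distance. I would handle this by writing the count explicitly as $\sum_\delta \alpha_\delta\cdot(\text{intersection number})$, where $\alpha_\delta$ is the number of coordinates at distance $\delta$. This expression is visibly a function of the multisets alone.
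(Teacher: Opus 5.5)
Your proof is correct. Steps 2 and 3 match the paper: orbit refinement via the wreath-product stabilizer from \cref{prop:orbit-product}, the multiset count over nonempty cells, and the telescoping split.

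The one real difference is how you get $P_a\subseteq Q_a^-$. You show directly that the block-multiset partition is equitable, writing each neighbor count as $\sum_\delta\alpha_\delta\cdot(\text{intersection number})$. The paper instead starts from the finer ordered distance-vector space $\bigotimes_i W(F_i,r_i)$. That space is already invariant under $A$ and the first-sphere projection, as in the proof of \cref{thm:main}. The paper then notes that permuting equal coordinates within a block is a root-fixing automorphism. Such permutations commute with $B$ and $E$ and fix $\one$, so $P_a$ lies in their fixed subspace of the residual tensor space, and that fixed subspace is exactly $Q_a^-$.

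The paper's route replaces your neighbor count by a symmetry argument and reuses the tensor model already built. Yours is self-contained and makes the $B,E$-invariance of $Q_a^-$ explicit. Neither argument uses primeness for this first inclusion; primeness enters only in $Q_a^-\subseteq U_a^-$, as in the paper's subsequent remark.
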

\begin{proof}
The tensor product of the factor sphere spans is invariant under
adjacency and the global first-sphere projection.  Permuting equal
coordinates commutes with these operators and fixes the all-ones seed.
Thus $P_a$ lies in the permutation-fixed part of this tensor space after
deletion, which is precisely $Q_a^-$.  A stabilizer orbit type refines a
coordinate-distance multiset, by \cref{prop:orbit-product}.  Therefore
each distance-multiset cell is a union of stabilizer orbits, proving
$Q_a^-\subseteq U_a^-$.  The dimension count selects, independently in
each block, a multiset of size $m_i$ from $D_i+1$ distances, and removes
the root cell.  The split follows from the inclusions.
\end{proof}

\begin{remark}[Why the hypotheses differ]
The inclusion $P_a\subseteq Q_a^-$ and its dimension upper bound can be
formed for identical rooted equitable factors without primeness.
However, $Q_a^-\subseteq U_a^-$ need not hold for an arbitrary nonprime
presentation.  For example, presenting $Q_3$ as $Q_2\square K_2$ gives a
five-dimensional residual distance-vector space, whereas its full root
stabilizer has only three non-root orbits.  The prime-factor hypotheses
in \cref{prop:split} cannot be dropped by simply invoking a blockwise
wreath product.
\end{remark}

For a distance-regular graph itself, define
\begin{equation}\label{eq:graded-defect}
 \Delta_{G,a}(z)=\Omega_{G,a}(z)-(1+z+\cdots+z^D).
\end{equation}
By \cref{prop:dr-module}, this has nonnegative coefficients and
$\Delta_{G,a}(1)=\defect(G,a)$.  Its coefficient of $z^j$ is the number
of stabilizer orbits in $S_j(a)$ minus one.  We do not assign this formula
to the principal module of a general product: that module need not be
the direct sum of one line in each total-distance sphere.

If a vertex-transitive distance-regular graph has zero defect, its root
stabilizer is transitive on every distance sphere.  Together with
vertex-transitivity, this is equivalent to distance-transitivity.
For instance $\defect(H(d,q),a)=0$: alphabet permutations fixing root
symbols and coordinate permutations are transitive on each sphere.
Likewise $\defect(J(v,k),a)=0$, for $1\le k\le v/2$, because the subgroup
$\operatorname{Sym}(k)\times\operatorname{Sym}(v-k)$ fixing a root
$k$-subset is transitive on $k$-subsets with a given intersection size
with the root.  These are consequences of their familiar permutation
actions, not new distance-transitivity results.

\subsection{Shrikhande factors and Doob graphs}
We give the small rooted calculation underlying the Doob formulas.
Write $\mathbb Z_4^2$ additively and let
\[
 S=\{\pm(1,0),\ \pm(0,1),\ \pm(1,1)\}.
\]
The Shrikhande graph $\Sh$ is the Cayley graph on $\mathbb Z_4^2$ with
connection set $S$ and root $o=(0,0)$.

\begin{lemma}\label{lem:shrikhande}
The graph $\Sh$ is vertex-transitive, Cartesian-prime, and strongly
regular with parameters $(16,6,2,2)$.  Its rooted stabilizer orbits are
\[
 \{o\},\quad S,\quad
 T_3=\{(0,2),(2,0),(2,2)\},\quad
 T_6=\{(1,2),(1,3),(2,1),(2,3),(3,1),(3,2)\}.
\]
Their sizes are $1,6,3,6$, at distances $0,1,2,2$.  The local graph is
$C_6$, and the rooted spectral generating function is
\begin{equation}\label{eq:shrikhande-Phi}
 \Phi_{\Sh}(z)=\frac{e^{6z}+6e^{2z}+9e^{-2z}}{16}
              =\Phi_{K_4}(z)^2.
\end{equation}
Consequently $\Sh$ and $K_4$ belong to the same boundary class.
\end{lemma}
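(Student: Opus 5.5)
Vertex-transitivity is immediate because $\Sh$ is a Cayley graph: translations act regularly. For the strongly regular parameters I will compute directly at $o$ and then transfer to every vertex by translation. The valency is $|S|=6$. The neighborhood $S$ induces the cycle
\[
(1,0),(1,1),(0,1),(-1,0),(-1,-1),(0,-1),
\]
since consecutive differences lie in $S$ and the remaining differences, namely $\pm(1,-1)$ and $\pm(2,\ast)$, do not. Hence the local graph is $C_6$ and $\lambda=2$. The sumset $S+S$ covers the nine non-neighbors $T_3\cup T_6$. I will check two cases: $(2,0)$ has common neighbors $(1,0)$ and $(1,1)$, and $(1,2)$ has common neighbors $(0,1)$ and $(1,1)$. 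The automorphism $x\mapsto -x$ and the order-three automorphism $(x,y)\mapsto(-y,x-y)$ preserve $S$. Together they carry these two cases to every element of $T_3$ and $T_6$, so $\mu=2$. Distances are therefore $0,1,2,2$ on the listed sets.

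For the orbits, the linear maps above generate a group $\langle -1,\text{order }3\rangle$ of order six. It is transitive on $S$ and on $T_3$, and the orbit of $(1,2)$ under it has six elements, giving $T_6$. So the stabilizer orbits are at most the displayed four sets. To show they are exactly these four, I need a stabilizer-invariant property separating $T_3$ from $T_6$. My candidate invariant is the pair of common neighbors. For $v\in T_3$, e.g. $(2,0)$, the common neighbors $(1,0),(1,1)$ are adjacent. For $v\in T_6$, e.g. $(1,2)$, the common neighbors $(0,1),(1,1)$ are also adjacent, so this fails. I will try a second invariant instead: whether the two common neighbors are antipodal in the local $C_6$. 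For $(2,0)$ versus $(1,2)$ they are consecutive in both cases. The fallback is the count of $\mu$-graph structure, or whether $v$ is the unique vertex with a given neighbor pair. Concretely, a pair of adjacent neighbors $x,y\in S$ has exactly one other common neighbor besides $o$, namely $x+y-o$-type vertices. In $T_3$ these are the elements $2s$, and in $T_6$ they are the elements $s+s'$ with $s\neq s'$. The cleanest invariant I expect to work is the $\mu$-graph of $o$ and $v$ inside $\Sh$ together with the third common neighbor. $T_3=2S$ is characterized by the fact that $v$, $o$ and the midpoint $s$ satisfy $v-s=s$. This is not intrinsically graph-theoretic, so I would instead verify directly that $(2,0)$ and $(1,2)$ differ in the number of common neighbors with the triangle structure. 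This separation is the main obstacle. Alternatively, I would cite the known order $|\Aut(\Sh)|=192$, so the stabilizer has order $12$, and count orbit sizes. An orbit of size $9$ on the non-neighbors is impossible, since $9\nmid 12$, so the orbits on non-neighbors are $3$ and $6$.

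Cartesian primality: a nontrivial factorization of a 16-vertex 6-regular graph would be $K_4\square K_4$, $C_4\square C_4$-type, or involve a factor $Q_2$. Every such factorization has a local graph that is a disjoint union of cliques or discrete, never $C_6$. More directly, in a Cartesian product every induced cycle of length $6$ in a local graph is impossible, because the neighbors of a vertex split by factor, with no edges between factor classes, and within each class the induced graph equals the local graph of that factor. A connected local graph $C_6$ therefore forces a single factor.

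Finally, $\Phi_{\Sh}$ comes from the SRG spectrum $6,2,-2$ with local weights at a vertex equal to multiplicities over $16$, which are $1,6,9$. These multiplicities follow from the standard formulas for $(16,6,2,2)$ (eigenvalues $2,-2$, trace zero). Squaring $\Phi_{K_4}=(e^{3z}+3e^{-z})/4$ gives the displayed expression. Then $\eta_{\Sh}=\frac16\cdot 2\Phi_{K_4}'/\Phi_{K_4}=\eta_{K_4}$, so $\Sh$ and $K_4$ lie in one boundary class, in agreement with \eqref{eq:Phi-power} with $d=6$ and $d=3$.
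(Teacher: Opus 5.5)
\paragraph{Separating $T_3$ from $T_6$.} This is the one step that does not go through as written. The cause is an arithmetic slip, not a missing idea.

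\begin{itemize}
\item The common neighbors of $o$ and $(2,0)$ are not $(1,0),(1,1)$. Since $(2,0)-(1,1)=(1,-1)\notin S$, a difference you yourself excluded when checking the local graph, $(1,1)$ is not adjacent to $(2,0)$. The correct set is $\{(1,0),(-1,0)\}$.
\item In your hexagon $(1,0),(1,1),(0,1),(-1,0),(-1,-1),(0,-1)$, the pair $(1,0),(-1,0)$ is antipodal, hence nonadjacent. The common neighbors $(0,1),(1,1)$ of $o$ and $(1,2)$ are consecutive.
\item So your first candidate invariant does separate the two sets, and so does your second. A root-fixing automorphism $\phi$ maps the common-neighbor pair of $o,v$ onto that of $o,\phi(v)$ and preserves adjacency between them. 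Hence no element of $T_3$ can be carried into $T_6$.
\item Combine this with the transitivity of your group $\langle -1,(x,y)\mapsto(-y,x-y)\rangle$ on each displayed set. That group is exactly the cyclic group generated by the paper's $\tau(x,y)=(y,-x+y)$. Together these give the four orbits, and this is precisely the paper's argument.
\end{itemize}

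As written, however, you discard this invariant. Your later remark about $x+y$ is garbled: for adjacent $x,y\in S$, the second common neighbor $x+y$ always lies in $T_6$ and never in $T_3$, which is the same invariant again. The only argument you actually finish with is the citation $|\Aut(\Sh)|=192$. That route is logically sound, since $T_3$ and $T_6$ each lie in a single orbit and $9\nmid 12$. But it rests on an external fact that is harder to establish than the lemma itself, and your proposal does not justify it.

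\paragraph{Smaller points.}
\begin{itemize}
\item The same slip appears in your check of $\mu=2$. The count is still two after correction, so that conclusion stands.
\item Your list of non-consecutive differences in the hexagon omits $\pm(1,2)$ and $(0,2)$. All nine non-neighbor differences are indeed outside $S$, so the conclusion is unaffected.
\item In the primality part, your enumeration of factorizations is wrong. $C_4\square C_4$ is $4$-regular, and a $Q_2$ factor would need a $4$-regular cofactor on four vertices.
\item Your claim that a local graph of a Cartesian product cannot contain an induced $C_6$ is false as stated; $\Sh\square K_2$ is a counterexample.
\item What you actually use is that the local graph of a nontrivial product is disconnected. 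That is correct and is the paper's argument.
\end{itemize}
The spectral computation and the boundary-class conclusion agree with the paper.
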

\begin{proof}
Translations give vertex-transitivity and the connection set gives
valency six.  The linear map
$\tau(x,y)=(y,-x+y)$ preserves $S$.  Its orbits on the nonzero elements
are exactly $S,T_3,T_6$: it acts as a six-cycle on $S$, a three-cycle on
$T_3$, and a six-cycle on $T_6$.  For the representatives $(1,0),(0,2)$,
and $(1,2)$, the common-neighbor sets with $o$ are respectively
\[
 \{(0,3),(1,1)\},\qquad
 \{(0,1),(0,3)\},\qquad
 \{(0,1),(1,1)\}.
\]
Translation and $\tau$ therefore show that every pair of distinct
vertices has two common neighbors.  This proves the strongly regular
parameters and diameter two.  The induced graph on $S$ is the cycle
\[
 (1,0),(1,1),(0,1),(3,0),(3,3),(0,3),(1,0),
\]
as is checked by subtracting consecutive elements and the other pairs.
For $T_3$ the two neighbors in $S$ are nonadjacent in this cycle, whereas
for $T_6$ they are adjacent.  This property is preserved by every
root-fixing automorphism, so the two sets cannot merge.  Since $\tau$
is transitive on each displayed set, these are exactly the stabilizer
orbits.

A nontrivial Cartesian product of connected graphs has a disconnected
local graph: neighbors belonging to different coordinate directions
are never adjacent, and every direction is nonempty.  The connected
local graph $C_6$ therefore proves Cartesian primality of $\Sh$.

The strongly regular identity is $A^2=4I+2J$, so the two nonprincipal
eigenvalues are $2$ and $-2$.  Their multiplicities are $6$ and $9$, from
the total dimension and $\tr A=0$.  Vertex-transitivity makes every
diagonal entry of a spectral projection equal to its rank divided by
$16$.  This gives the first expression in \eqref{eq:shrikhande-Phi}.
Since $\Phi_{K_4}(z)=(e^{3z}+3e^{-z})/4$, the second expression follows.
The valencies are six and three, so their normalized logarithmic
derivatives are equal.
\end{proof}

Define the Doob graph
\[
 \operatorname{Doob}(m,n)=\Sh^{\square m}\square K_4^{\square n},
 \qquad m,n\ge0,\quad N=2m+n\ge1.
\]
The convention allows a missing block when its exponent is zero.

\begin{theorem}[Doob defects]\label{thm:doob}
The graph $\operatorname{Doob}(m,n)$ is distance-regular of diameter
$N=2m+n$ with intersection numbers
\begin{equation}\label{eq:doob-array}
 c_j=j,\qquad a_j=2j,\qquad b_j=3(N-j).
\end{equation}
At every root,
\begin{align}
 \dim P_a&=N,\qquad
 \dim U_a^-=(n+1)\binom{m+3}{3}-1,\label{eq:doob-dim}\\
 \defect&=(n+1)\binom{m+3}{3}-1-N.\label{eq:doob-defect}
\end{align}
Its orbit profile and graded defect are
\begin{align}
 \Omega_{m,n}(z)
 &=\left(\sum_{\alpha_0+\alpha_1+\alpha_2+\alpha_3=m}
 z^{\alpha_1+2\alpha_2+2\alpha_3}\right)(1+z+\cdots+z^n),
 \label{eq:doob-profile}\\
 \Delta_{m,n}(z)&=\Omega_{m,n}(z)-(1+z+\cdots+z^N),
 \label{eq:doob-Delta}
\end{align}
where the summation indices are nonnegative integers.  In particular,
\begin{equation}\label{eq:top-defect}
 [z^N]\Delta_{m,n}(z)=m.
\end{equation}
The split from \cref{prop:split} is
\begin{align*}
 \delta_{\mathrm{met}}&=(n+1)\binom{m+2}{2}-1-N,\\
 \delta_{\mathrm{orb}}&=(n+1)\left(\binom{m+3}{3}-\binom{m+2}{2}\right).
\end{align*}
\end{theorem}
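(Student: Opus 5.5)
We split the argument into the metric part (distance-regularity and $\dim P_a$) and the orbit part (the count $\dim U_a^-$ and the profile).

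\emph{Distance-regularity and $\dim P_a$.} By \cref{lem:shrikhande}, $\Phi_{\Sh}=\Phi_{K_4}^2$ and $\Sh$, $K_4$ lie in the same boundary class. Hence the rooted local measure of $\operatorname{Doob}(m,n)$ equals that of $K_4^{\square N}$, namely the binomial law of a sum of $N$ independent copies of the $K_4$ root measure. The cleanest route to distance-regularity uses the tensor/spectral model from the proof of \cref{thm:main}. The sphere sizes of the product are $\kappa_j=\binom Nj3^j$; this follows from the generating functions $1+6z+9z^2=(1+3z)^2$ for $\Sh$ and $1+3z$ for $K_4$. The intersection numbers $c_j,a_j,b_j$ are then read off directly. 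For a vertex at total distance $j$, count neighbours by coordinate. In an $\Sh$ coordinate at distance $1$ from the root, the vertex has $1$ neighbour nearer, $2$ at the same distance and $3$ farther. At distance $2$, it has $2$ nearer and $4$ at the same distance; this holds for both $T_3$ and $T_6$, since $\mu=2$ and the valency is $6$. In a $K_4$ coordinate, the counts are $(0,0,3)$ at distance $0$ and $(1,2,0)$ at distance $1$. Summing over coordinates, a coordinate at local distance $\rho$ contributes $c=\rho$, $a=2\rho$ and $b=3(\text{max}-\rho)$. This gives \eqref{eq:doob-array}, independently of the vertex, so the graph is distance-regular of diameter $N$. \cref{prop:dr-module} then gives $\dim P_a=N$; alternatively, \cref{thm:powers}-type reasoning gives $|N\{3,-1\}|-1=N$.

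\emph{Orbits.} Apply \cref{prop:orbit-product}. Its hypotheses hold because $\Sh$ and $K_4$ are nonisomorphic, vertex-transitive and Cartesian-prime; for $\Sh$ this is \cref{lem:shrikhande}, and for $K_4$ it was noted in \cref{thm:complete}. We have $t(\Sh)=4$, with orbits at distances $0,1,2,2$, and $t(K_4)=2$, with orbits at distances $0,1$. Therefore $\dim U_a^-+1=\binom{m+3}{m}\binom{n+1}{n}$, which gives \eqref{eq:doob-dim} and \eqref{eq:doob-defect}. The profile formula \eqref{eq:orbit-profile} specialises directly to \eqref{eq:doob-profile}. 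When a block exponent is zero, its factor is omitted, consistent with $\binom{\cdot}{0}=1$.

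\emph{Top coefficient.} The coefficient of $z^N$ in $\Omega$ requires the $K_4$ block to contribute $z^n$ and the $\Sh$ block to contribute $z^{2m}$. The latter forces $\alpha_0=\alpha_1=0$ and $\alpha_2+\alpha_3=m$, which has $m+1$ solutions. Subtracting the $1$ from $1+\cdots+z^N$ gives \eqref{eq:top-defect}.

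\emph{Split.} For the split in \cref{prop:split}, the diameters are $D=2$ for $\Sh$ and $1$ for $K_4$. Hence $\dim Q_a^-+1=\binom{m+2}{2}(n+1)$, and the two displayed formulas follow by subtraction. The main care point is the distance-regularity step. One must check that both distance-2 orbits of $\Sh$ have $c=2$ and $a=4$, which follows from $\lambda=\mu=2$. One must also note that the vertex-level counts depend only on the total distance $j$, even though they are computed coordinatewise.
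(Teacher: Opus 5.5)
Your proposal is correct and follows the paper's proof essentially step for step. The paper likewise sums the coordinatewise counts $(\rho,2\rho,3(D_i-\rho))$ to get the array and $\dim P_a=N$ via \cref{prop:dr-module}, and it applies \cref{prop:orbit-product,prop:split} with $t=4,2$ and factor diameters $2,1$ for the orbit count, profile, top coefficient and split; your remarks on the local measure and the sphere sizes $\binom Nj3^j$ are not needed, since the paper uses the single-boundary-class argument only as an alternative check of $\dim P_a=N$.
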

\begin{proof}
A Shrikhande coordinate at distance $s\in\{0,1,2\}$ has $s$ neighbors at
distance $s-1$, $2s$ at distance $s$, and $6-3s$ at distance $s+1$.
A $K_4$ coordinate, for $s\in\{0,1\}$, has the corresponding counts
$s,2s,3-3s$.  At a product vertex of total distance $j$, summing these
counts gives exactly \eqref{eq:doob-array}, independently of its
coordinate-distance pattern.  Distances add, so the diameter is $N$.
This proves distance-regularity and gives $\dim P_a=N$ by
\cref{prop:dr-module}.  Equivalently, \cref{lem:shrikhande} puts every
factor in one boundary class; its total spectral support is
$\{3N-4j:0\le j\le N\}$, and \cref{thm:main} gives the same dimension.

The two prime factors are nonisomorphic and vertex-transitive.  Their
numbers of rooted orbits are four and two.  Formula
\eqref{eq:orbit-dimension} gives \eqref{eq:doob-dim}, hence
\eqref{eq:doob-defect}.  Their rooted orbit distances are $(0,1,2,2)$
and $(0,1)$, so \eqref{eq:orbit-profile} gives
\eqref{eq:doob-profile}, including zero exponents by the empty-product
convention.  Equation \eqref{eq:graded-defect} gives
\eqref{eq:doob-Delta}.

To obtain degree $N=2m+n$, every Shrikhande coordinate must have distance
two and every $K_4$ coordinate distance one.  The first block then has
$\alpha_0=\alpha_1=0$ and $\alpha_2+\alpha_3=m$, giving $m+1$ orbit
types.  Subtracting the single primary line proves
\eqref{eq:top-defect}.  Finally \eqref{eq:sandwich}, with factor diameters
two and one, gives
$\dim Q_a^-=(n+1)\binom{m+2}{2}-1$, and hence the split.
\end{proof}

The intersection array \eqref{eq:doob-array} is that of $H(N,4)$.
Egawa's classification~\cite{Egawa1981} says that distance-regular graphs
with this array are exactly the corresponding Doob graphs, including
the Hamming case $m=0$.  Therefore, within this classified family, the
array together with the coefficient in \eqref{eq:top-defect} specifies
$m$ and $n=N-2m$, and hence the isomorphism type.  The classification is
used only for this final interpretation; none of the preceding product
or defect formulas depends on it.

\subsection{What principal moments do not detect}
For fixed $N$, all Doob graphs in \cref{thm:doob} have the same principal
adjacency--degree moments, by \cref{thm:moments}.  Their defects need not
be equal.  This proves that the defect is not determined by those
principal moments.  It does not establish a refinement relation between
the defect and any entire family of full-matrix spectral invariants.

There is also a direct punctured trace that separates these graphs.
The operator $EBE$ is adjacency of the local graph, extended by zero.
At a Doob root this local graph is $mC_6\sqcup nK_3$, whereas at a root of
$H(N,4)$ it is $NK_3$.  Since a triangle contributes six to the cubic
adjacency trace and a six-cycle contributes zero,
\begin{equation}\label{eq:local-cubic}
 \tr((EBE)^3)=6n=6(N-2m).
\end{equation}
Thus the difference from the Hamming value is $12m$.  The word in
\eqref{eq:local-cubic} is available from the punctured generators because
$E=3NI-D(G-a)$.  This distinguishes full traces from principal moments:
on the primary punctured module, $EBE=2E$ and its cubic trace is always
$8$, whereas the full trace also sees nonprimary local constituents.

\section{Further questions}\label{sec:questions}

The product theorem reduces the principal-module problem to two pieces
of finite data: the partition by normalized logarithmic derivatives and
the affine ranks of additive fibres.  Several questions remain after
that reduction.  One is to classify pairs of distance-regular graphs
satisfying \eqref{eq:Phi-power}.  The pair $\Sh,K_4$ shows that equal
boundary classes need not have equal diameters or equal spectra.  A
structural description of such pairs would complement the exact
spectral test.

For three or more boundary classes, the fibres have dimension at most
$t-1$ but can have further affine dependencies.  The two-class formula
in \cref{thm:complete} is a count of consecutive triples in a rectangle.
An analogous combinatorial evaluation for several distinct complete
factors would make the higher-rank cases equally explicit.  The theorem
already computes these dimensions exactly; what is missing is a useful
closed evaluation of the resulting finite ranks for broad families.

The full-algebra problem is separate.  By \cref{lem:corner}, for
connected deletions it asks when the first-sphere projection and
residual adjacency recover all distance projections.  The Hamming
calculations show precisely how later starting levels can obstruct this.
Other distance-regular families may exhibit both endpoint loss and
identification of distinct irreducible modules.  The general product
classification of the principal module does not, by itself, determine
these full compressed algebras.

\appendix
\section{Finite consistency checks}\label{app:checks}

The proofs above do not depend on computation.  The following checks
provide reproducible tests of the formulas and of the distinction
between principal-module dimension and full-algebra dimension.
For a hypercube, generate the span of all words in its residual matrices
$B,E$, starting at $I$.  Computation over $\mathbb F_{1000003}$ gives
\begin{center}
\begin{tabular}{c*{6}{r}}
\toprule
$d$&1&2&3&4&5&6\\
\midrule
Computed word-span rank modulo $1000003$&1&5&13&26&43&64\\
$\dim\Balg_a$ from \cref{thm:cube}&1&5&13&26&43&64\\
$\dim e\Talg(a)e$&1&5&13&26&45&71\\
\bottomrule
\end{tabular}
\end{center}
A modular rank is a lower bound for the corresponding rational matrix
rank, not a stand-alone proof of equality over $\R$.  The analytic proof
of \cref{thm:cube} supplies the matching upper bound for every $d$.

For $q\ge3$, the full punctured Hamming algebras were checked in the
same manner for $(d,q)=(1,3),(2,3),(2,4),(3,3),(3,4),(4,3)$.
The resulting ranks are $2,10,10,24,24,44$, respectively, in agreement
with \cref{thm:hamming}; the corresponding compressed-corner dimensions
are $2,10,10,28,28,61$.

For mixed complete powers, one can instead work on the much smaller
quotient indexed by $(i,j)\in\{0,\ldots,m\}\times\{0,\ldots,n\}$ with
$(0,0)$ removed.  On cell-constant functions, adjacency has diagonal
$i(p-2)+j(q-2)$, and row entries to the cells
$(i-1,j),(i+1,j),(i,j-1),(i,j+1)$ equal
\[
 i,\quad(m-i)(p-1),\quad j,\quad(n-j)(q-1),
\]
respectively, when the destination cell is present.  The projection $E$
is diagonal with entry $\mathbf1_{i+j=1}$.  Closing the all-ones vector
under these two integer matrices gives the following ranks, agreeing
with \cref{thm:complete}:
\begin{center}
\begin{tabular}{rrrrrrr}
\toprule
$p$&$q$&$m$&$n$&$\dim P_a$&$\dim U_a^-$&$\defect$\\
\midrule
2&3&3&2&11&11&0\\
2&3&6&4&33&34&1\\
2&3&8&5&47&53&6\\
3&6&4&2&13&14&1\\
3&6&6&4&25&34&9\\
4&6&6&4&33&34&1\\
3&4&4&3&19&19&0\\
\bottomrule
\end{tabular}
\end{center}
These are finite checks, not substitutes for the general fibre-counting
argument.  The accompanying verification script records the field used
for every modular computation; it is archived, together with a Lean~4
formalization of the combinatorial core of \cref{thm:complete}, at
\url{https://github.com/Lzp88/punctured-ad-verification}.  All numerical
statements in this appendix are consistency checks; none is used to
establish a general theorem.

\section*{Declaration on the use of generative AI}

The adjacency--degree algebra program originates in the author's joint
work with Pengxiang Li~\cite{LuLi2026}.  The punctured variant studied
here grew out of the author's discussions with OpenAI's GPT on how that
framework should treat regular graphs, and the results and proofs of
this paper were developed and brought to their present form in sustained
interaction with that model, under the author's direction.

The manuscript was independently audited with Anthropic's Claude.  The
audit comprised a line-by-line mathematical review of all proofs, the
exact rational and modular consistency computations reported in
\cref{app:checks}, and a machine-checked formalization in Lean~4 over
Mathlib of the following components: the lattice fibre-counting
identity and the resulting closed form \eqref{eq:complete-dim} in the
proof of \cref{thm:complete}; the rank evaluation $\min\{2,r\}$ for the
two-class fibre matrices; the closed form of $\eta_q$ displayed in that
proof, together with the fact that distinct complete graphs lie in
distinct boundary classes; and the inequality part of \cref{lem:sumset}.
The formalization therefore machine-verifies the derivation of
\cref{thm:complete} from \cref{thm:main}; \cref{thm:main} itself and the
results of Sections~\ref{sec:dr}--\ref{sec:orbits} were verified by the
line-by-line review and the finite computations only.  The Lean
theorems contain no unproven assumptions and depend on no axioms beyond
the standard Lean/Mathlib base.

The verification script, its exact JSON certificate, and the complete
Lean development, with pinned toolchain and dependency versions, are
publicly archived at
\url{https://github.com/Lzp88/punctured-ad-verification}.

The author directed and reviewed all of this work and takes full
responsibility for the content of the paper.

\end{document}